%
%
\documentclass[a4paper,twoside,11pt,reqno]{amsart}
\usepackage{amsmath,amsthm,amssymb,enumerate,pgf,cite}
\usepackage{tikz}
\usetikzlibrary{quotes,angles}
\usepackage{dsfont}

\usepackage{setspace}

%
%
\vfuzz2pt 
\hfuzz2pt 
\newtheorem{thm}{Theorem}[section]

\newtheorem{lemma}[thm]{Lemma}

\theoremstyle{definition}
\newtheorem{defn}[thm]{Definition}
\theoremstyle{remark}
\newtheorem{rem}[thm]{Remark}
\numberwithin{equation}{section}
\makeatletter
%

\def\bea{\begin{eqnarray*}}
\def\eea{\end{eqnarray*}}




%
\let\TeXchi\chi
\def\chi{{\setbox0 \hbox{\mathsurround0pt
$\TeXchi$}\hbox{\raise\dp0 \copy0 }}}
%

%


\textwidth=16.95cm
\oddsidemargin=0pt
\evensidemargin=0pt
\textheight=21.5cm

\begin{document}

\title{The Gevrey class of the Euler-Bernoulli beam model with singularities}%

\author[J.E. Mu\~{n}oz Rivera]{Jaime E. Mu\~{n}oz Rivera}%
\address{Universidad del B\'\i o B\'\i o Collao 1202, Casilla 5C, \newline
\indent Concepci\'{o}n - Chile}

\address{Laboratório Nacional de Computação Científica (LNCC) , \newline
\indent Petr\'{o}polis, Brasil}

\indent
\email{jemunozrivera@gmail.com}%

\author[M.G. Naso]{Maria Grazia Naso}

\address{DICATAM,
Universit\`{a} degli Studi di Brescia, Via Valotti 9, 25133
Brescia,
Italia}%
\email{mariagrazia.naso@unibs.it}%

\author[B.T. Silva Sozzo]{Bruna T. Silva Sozzo}

\address{Universidade Norte do Paran\'{a}, Londrina, Brasil}%
\email{bruna.sozzo@uenp.edu.br}%

\subjclass [2020]{35B65, 35B40, 35Q74, 35B35, 74K10}%
\keywords{Euler-Bernoulli beam, asymptotic behavior, pointwise dissipation, semigroup regularity}%
%
\begin{abstract}
    We study  the Euler-Bernoulli beam model with singularities  at the points $x=\xi_1$, $x=\xi_2$ and with localized viscoelastic dissipation of Kelvin-Voigt type. We assume that the beam is composed by two materials; one is an elastic material and the other one is a viscoelastic material of Kelvin-Voigt type. 
    
    Our main result is that the corresponding semigroup is  immediately differentiable and also of  Gevrey class $4$. 
    In particular, our result implies that the model is exponentially stable, has the linear stability property, and  the smoothing effect property over the initial data.
\end{abstract}
\maketitle


\section{Introduction}
 In this work we consider the Euler-Bernoulli beam equation of lenght $\ell$ consisting of three components. Introducing $0<\ell_0<\ell_1<\ell$, components $(0,\ell_0)$ and $(\ell_1,\ell)$  are made by an elastic material 
 whereas component $(\ell_0,\ell_1)$ is viscoelastic of  Kelvin-Voigt type  \cite{chawla}. 
 Furthermore, we consider a point actuator that applies a localized force or moment at a specific point along the beam, rather than a distributed effect along its length. We can understand that the point force represents, for example, a small piezoelectric patch or stack attached to the beam, which exerts a point force that pushes perpendicularly on the beam, while the concentrated moment represents, for example, a small piezoelectric patch that induces a localized bending moment when voltage is applied.
The piezoelectric effect generates this force or moment proportional to the applied voltage mediated by the piezoelectric coefficient.
The pointwise loading is modeled with a Dirac delta function:
$q(x)=F_p\delta(x-\xi)$ while the point moment’s effect on the curvature is modeled by $-\frac{d}{dx}[M_p\delta(x-\xi)]$, where $\delta$ stands for the Dirac delta function.  
    The following Fig.~\ref{fig01} describes the situation.

		\noindent 	%
		\begin{figure}[h!]
			\setlength{\unitlength}{2pt}
			\begin{center}
				\begin{tikzpicture}
					\draw[fill=yellow, opacity=0.3] (0,-0.3) rectangle (2.97,0.3); 
					\draw[fill=red!40, opacity=0.3] (3.03,-0.3) rectangle (4.97,0.3);
					\draw[fill=yellow, opacity=0.3] (5.03,-0.3) rectangle (8,0.3);
					\draw[->, very thick] (-1,0)--(9,0);
					\draw[->, very thick] (0,-1.2)--(0,1);
					\draw[->, very thick] (1.2,-0.5)--(1.2,0);
                    \draw[->, very thick] (4.5,-0.5)--(4.5,0);					
                    \draw[fill=gray, opacity=0.3, bottom color=black, top color= white] (-0.2,-0.7) rectangle (0,0.7);
					\draw[fill=gray, opacity=0.3, bottom color=black, top color= white] (8,-0.7) rectangle (8.2,0.7);
                    \draw[-] (-0.2,-0.7) node[below] {$0$};
					\draw[-] (1,-0.3) node[below] {$\xi_1$};
					\draw[-] (3,-0.3) node[below] {$\ell_0$};
                    \draw[-] (4.3,-0.3) node[below] {$\xi_2$};
					\draw[-] (5,-0.3) node[below] {$\ell_1$};
					\draw[-] (8,-0.7) node[below] {$\ell$};
					\draw[-] (3,0.15) node {$\bullet$};
					\draw[-] (3,-0.15) node {$\bullet$};
					\draw[-] (5,0.15) node {$\bullet$};
					\draw[-] (5,-0.15) node {$\bullet$};
					\draw[|-|,color=black, very thick] (0,0.5)--(2.97,0.5);
					\draw[|-|,color=black, very thick] (3.03,0.7)--(4.97,0.7);
					\draw[|-|,color=black, very thick] (5.03,0.5)--(8,0.5);
					\draw[-] (1.5, 0.6) node[above] {{\it Elastic part}};
					\draw[-] (4, 1.2) node[above] {{\it Viscoelastic}};
					\draw[-] (4, 0.8) node[above] {{\it part}};
					\draw[-] (6.5, 0.6) node[above] {{\it Elastic part}};
				\end{tikzpicture}
			\end{center}
			\vspace{-0.7cm}
			\caption{An Euler-Beroulli beam constructed by three components}\label{fig01}
		\end{figure}
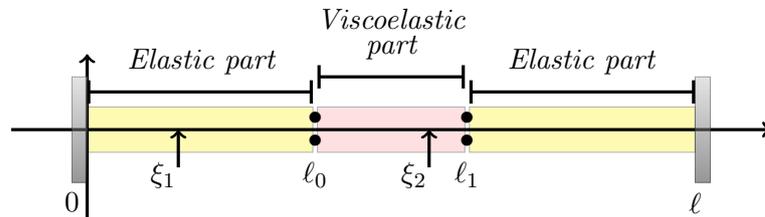

\noindent Let $0<T \leq +\infty$, the corresponding model with initial and clamped boundary conditions is given by 
    \begin{eqnarray} \label{probPuro}
        \left\{
        \begin{array}{ll}
        \rho u_{tt} + \left(\alpha  u_{xx} + \kappa  u_{txx}\right)_{xx} +\gamma_1 u_t\delta_{\xi_1}+\gamma_2 u_t\delta_{\xi_2} +\gamma_3 u_{xt}\dfrac{\partial \delta_{\xi_2} }{\partial  x}= 0 \ \ 
        &\text{in $(0,\ell) \times (0,T)$},\\
        \noalign{\medskip}
        u(0,t) = u_x(0,t) = u(\ell,t) = u_x(\ell,t) = 0, &\text{in $(0,T)$}, \\
        \noalign{\medskip}
        u(x,0) = u_0(x), \ \ u_t(x,0) = u_1(x),  &\text{in $(0,\ell)$},
        \end{array}
        \right.
    \end{eqnarray}
    where $ u(\cdot,\cdot)$ 
    represents the transversal displacement of the beam, and
    $u_0$ and $u_1 : (0, \ell)\to \mathbb{R}$ are assigned functions.
   Moreover, we assume that $\gamma_i$, $i=1,2,3$, are positive constants and
    \begin{eqnarray*}
       && \rho(x) :=\left\{
        \begin{array}{ll}
        \rho_1 & \text{if $x \in (0,\ell_0)\cup (\ell_1,\ell)$,} \\
        \noalign{\medskip}
        \rho_2  & \text{if $  x \in (\ell_0,\ell_1)$,}
        \end{array}
        \right.
        \\
        \noalign{\medskip}
        &&
         \alpha(x): =\left\{
        \begin{array}{lll}
        \alpha_1  & \text{if $x \in (0,\ell_0)\cup (\ell_1,\ell)$,}\\
        \noalign{\medskip}
        \alpha_2  & \text{if $  x \in (\ell_0,\ell_1)$,}
        \end{array}
        \right.
        \quad
        \kappa(x) :=\left\{
        \begin{array}{lll}
        0  & \text{if $x \in (0,\ell_0)\cup (\ell_1,\ell)$,}\\
        \noalign{\medskip}
        \alpha_0  & \text{if $  x \in (\ell_0,\ell_1)$,}
        \end{array}
        \right.
    \end{eqnarray*} 
    with  $\rho_1, \rho_2, \alpha_1, \alpha_2$ and  $\alpha_0$  positive constants.
   Function $\delta_\xi$ is the Dirac mass $+1$ at the point $x=\xi\in (0,\ell)$.
    Denoting by  $$ M(x,t) := \alpha(x) u_{xx}(x,t) + \kappa(x) u_{txx}(x,t),$$ the solution must satisfy the following transmission conditions at the interfaces $x=\ell_i$ with $i=0,1$ and in $(0,T)$
   \begin{eqnarray} \label{xxx}
   \begin{array}{l}
        u(\ell_i^-,t) = u(\ell_i^+,t),\quad  u_x(\ell_i^-,t) = u_x(\ell_i^+,t),\\
        \noalign{\medskip}
         M(\ell_i^-,t) = M(\ell_i^+,t),\quad M_x(\ell_i^-,t) = M_x(\ell_i^+,t).
         \end{array}
    \end{eqnarray} 
The above model has received considerable attention in the past and it was studied by several authors when $\gamma_1=\gamma_2=\gamma_3=0$.  I. Lasiecka \cite{92} considered the Euler-Bernoulli model with boundary dissipation effective only at the moment and she proved the exponential stability of the corresponding semigroup. 
In \cite{LiuChen} it was prove that the semigroup associated with the Euler-Bernoulli beam model with global viscoelastic damping is analytic and exponentially stable.
Liu et al. in \cite{LiuLiu} studied a localized viscoelastic damping and they  showed that the corresponding semigroup is exponentially stable but not  analytical.
In \cite{irena}  I. Lasiecka and B. Belinskiy analyzed the Euler-Bernoulli model with non dissipative boundary condition  of the type $u_{xx}(\ell)=-\kappa u_{xt}(\ell)$ and they proved that the corresponding semigroup is of Gevrey regularity order $\delta>2$. 
The method there used is based on microlocal analysis.
Caggio and Dell'Oro \cite{caggio} considered an Euler-Bernoulli beam equation with localized discontinuous structural damping and they proved that the associated $C_0$-semigroup $(S(t))_{t \ge 0}$ is of Gevrey class $\delta >24$ for $t>0$, hence immediately differentiable. Moreover, they showed that $(S(t))_{t \ge 0}$ is exponentially stable.

Our model presents singularities defined by the Dirac delta function and its derivative. In this context, it is important to determine the maximum regularity that the solution of the model possesses. 
For this reason we study this topic here.

Let us introduce some concepts that will be used throughout the text.
 First, we provide a definition of Gevrey class semigroups \cite{Gevrey}.
\begin{defn} \label{defGreve}    
    A strongly continuous semigroup $e^{At}$ is of Gevrey class $\delta>0$ for $t>t_0$  if $e^{At}$ is infinitely differentiable for $t>t_0$ and for every compact $K \in (t_0 ,\infty)$ and each $\theta>0$, there exists a constant $C = C(K,\theta)$ such that
    $$
   \left \| \left[e^{At}\right]^{(n)}\right\|_{\mathcal{L}(\mathcal{H})}\leq C\theta^n(n!)^\delta,\qquad \forall t\in K,\quad n=0,1,2,\cdots .
     $$
\end{defn}
\begin{rem}
 Gevrey regularity is described in terms of the bounds on all derivatives of the
semigroup. These bounds are weaker than the corresponding ones corresponding to characterization of analyticity, but they are stronger than the ones corresponding to differentiability (see \cite{Chen-Triggiani,pazy,taylor}). In other words, the  Gevrey class semigroup has more regular properties than a differentiable semigroup, but less regular than an analytical semigroup. The Gevrey functions of the $\delta$-class $(\delta > 1)$ tell us the degree of divergence of the corresponding Taylor expansion. The larger the $\delta$-class, the faster the divergence. 
If we denote by $G^\delta$ the Gevrey class for some fixed $\delta \ge 1$, the scale of Gevrey classes is a nested scale of the parameter $\delta$ that fills the gap between analytic $\mathfrak{A}$ and $C^\infty$ functions:
\[
\mathfrak{A}= G^1\subset G^\delta\subset G^{\delta'} \subset C^\infty, \quad \text{for $1<\delta<\delta'$.}
\]
The case of analytic functions (i.e. $\mathfrak{A}= G^1$ Gevrey-$\delta$-class function with $\delta=1$) is the simplest and arguably most important of this scale.
\end{rem}
Subsequently, Taylor (e.g. \cite{taylor}) gave a sufficient condition for a strongly continuous semigroup to be
of Gevrey-$\delta$-class.
\begin{thm} \label{defGe} 
    Let $S = (S(t))_{t \geq 0}$ be a strongly continuous and bounded semigroup on a Hilbert space $X$.  Suppose that the infinitesimal generator $A$ of the semigroup $S$ satisfies the following estimate, for some $0< \mu <1$:
    \begin{eqnarray} \label{condClasG}
        \lim_{\lambda \in \mathbb{R}, \ |\lambda| \rightarrow \infty} \sup |\lambda|^{\mu}\|(i \lambda - A)^{-1} \| < \infty.
    \end{eqnarray}
    Then $S = (S(t))_{t \geq 0}$ is Gevrey-$\delta$-class for $t > 0$, for every $\delta > \dfrac{1}{\mu}$.
\end{thm}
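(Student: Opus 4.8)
The plan is to recover the semigroup for $t>0$ from a Dunford--Taylor contour integral taken along a curve that bends into the left half-plane at the rate $|\Im\lambda|^{\mu}$, then to differentiate under the integral sign and estimate the resulting integrals by Stirling's formula; this is the route of Taylor \cite{taylor}. First I would build the contour. Since $S$ is bounded, $\{\Re\lambda>0\}\subset\rho(A)$ with $\|(\lambda-A)^{-1}\|\le M/\Re\lambda$, and by \eqref{condClasG} there are $C,\lambda_{0}>0$ with $\|(i\tau-A)^{-1}\|\le C|\tau|^{-\mu}$ for $|\tau|\ge\lambda_{0}$. For $\lambda=\sigma+i\tau$ the Neumann expansion $(\lambda-A)^{-1}=(i\tau-A)^{-1}\sum_{k\ge0}(-\sigma)^{k}(i\tau-A)^{-k}$ converges as soon as $|\sigma|\,\|(i\tau-A)^{-1}\|<1$. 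Fixing $0<b<1/C$, this yields a piecewise-smooth curve $\Gamma\subset\rho(A)$, symmetric about the real axis, which coincides with $\{\Re\lambda=-b|\Im\lambda|^{\mu}\}$ for $|\Im\lambda|$ large and stays in $\{\Re\lambda>0\}$ for bounded $|\Im\lambda|$, along which $\|(\lambda-A)^{-1}\|\le C_{1}(1+|\Im\lambda|)^{-\mu}$; on the bounded portion of $\Gamma$ this last estimate is merely continuity of the resolvent on a compact subset of $\rho(A)$.

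Next I would set, for $t>0$, $T(t):=\frac{1}{2\pi i}\int_{\Gamma}e^{\lambda t}(\lambda-A)^{-1}\,d\lambda$. Parametrising $\Gamma$ by $\tau=\Im\lambda$, on its unbounded part $|e^{\lambda t}|=e^{-bt|\tau|^{\mu}}$ while $\|(\lambda-A)^{-1}\|\,|d\lambda|\le C_{2}|\tau|^{-\mu}\,d\tau$, so the integral converges absolutely in $\mathcal{L}(X)$ for every $t>0$, the exponential factor compensating the (otherwise non-integrable) decay $|\tau|^{-\mu}$. The same domination permits differentiation under the integral, giving $T\in C^{\infty}((0,\infty);\mathcal{L}(X))$ with $T^{(n)}(t)=\frac{1}{2\pi i}\int_{\Gamma}\lambda^{n}e^{\lambda t}(\lambda-A)^{-1}\,d\lambda$. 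I would then identify $T(t)=S(t)$ by the standard argument: for $x\in D(A^{2})$ the resolvent gains the extra decay $\|(\lambda-A)^{-1}x\|=O(|\lambda|^{-2})$, so the inverse Laplace representation $S(t)x=\frac{1}{2\pi i}\int_{\Re\lambda=a}e^{\lambda t}(\lambda-A)^{-1}x\,d\lambda$ holds and, deforming the vertical line onto $\Gamma$ by Cauchy's theorem (the horizontal connecting segments vanish thanks to that extra decay), coincides with $T(t)x$; density of $D(A^{2})$ and boundedness of both operators extend the identity to all of $X$.

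Then I would estimate the derivatives. Fix a compact $K\subset(0,\infty)$, say $K\subset[t_{1},t_{2}]$ with $t_{1}>0$. For $t\in K$,
\[
\|S^{(n)}(t)\|_{\mathcal{L}(X)}\le\frac{1}{2\pi}\int_{\Gamma}|\lambda|^{n}e^{t\Re\lambda}\|(\lambda-A)^{-1}\|\,|d\lambda|.
\]
The bounded part of $\Gamma$ contributes at most $C_{3}R^{n}$ for a fixed $R$, which is harmless. On the unbounded part $|\lambda|\le\sqrt{2}\,|\tau|$, $e^{t\Re\lambda}=e^{-bt|\tau|^{\mu}}$ and $\|(\lambda-A)^{-1}\|\,|d\lambda|\le C_{2}|\tau|^{-\mu}\,d\tau$, so this contribution is at most a constant times $(\sqrt2)^{n}\int_{0}^{\infty}\tau^{\,n-\mu}e^{-bt\tau^{\mu}}\,d\tau$, which the substitution $u=\tau^{\mu}$ turns into $\frac{1}{\mu}(\sqrt2)^{n}\,\Gamma\big(\tfrac{n+1}{\mu}-1\big)(bt)^{-(\frac{n+1}{\mu}-1)}$. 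On $K$, $(bt)^{-(\frac{n+1}{\mu}-1)}\le C_{4}c^{\,n}$ for some $c=c(K,\mu)$, while Stirling's formula gives $\Gamma\big(\tfrac{n}{\mu}+c_{0}\big)\le C_{5}A^{n}(n!)^{1/\mu}$ with $A,c_{0}$ depending only on $\mu$. Collecting terms, $\|S^{(n)}(t)\|_{\mathcal{L}(X)}\le C(K)\,\Theta^{n}(n!)^{1/\mu}$ for all $t\in K$, $n\ge0$, with $\Theta=\Theta(K)>0$. Given now $\delta>1/\mu$ and $\theta>0$, write $\Theta^{n}(n!)^{1/\mu}=\theta^{n}(n!)^{\delta}\cdot(\Theta/\theta)^{n}(n!)^{-(\delta-1/\mu)}$; since $\delta-1/\mu>0$, the factor $(n!)^{-(\delta-1/\mu)}$ outpaces any geometric growth, so $(\Theta/\theta)^{n}(n!)^{-(\delta-1/\mu)}\le C(K,\theta)$ and $\|S^{(n)}(t)\|_{\mathcal{L}(X)}\le C(K,\theta)\,\theta^{n}(n!)^{\delta}$, which is precisely Gevrey class $\delta$ in the sense of Definition~\ref{defGreve}.

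The hard part will be the first two steps: converting the resolvent bound \emph{on the imaginary axis} into a usable bound along a curve that genuinely reaches the left half-plane (this is exactly where boundedness of the semigroup enters, to close $\Gamma$ near $\Im\lambda=0$), and the Stirling asymptotics $\Gamma(n/\mu+c_{0})\sim A^{n}(n!)^{1/\mu}$, which is what isolates the Gevrey exponent $1/\mu$ and hence the range $\delta>1/\mu$. By comparison, the identification $T(t)=S(t)$ is routine, though it does require the customary extra-regularity and density argument.
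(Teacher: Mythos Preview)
The paper does not actually prove this theorem: it is stated as a known criterion due to Taylor \cite{taylor} (and noted to be equivalent to results of Crandall--Pazy \cite{Crand-Pazy}), and is then \emph{applied} in Theorem~\ref{Teo2} rather than established. So there is no ``paper's own proof'' to compare against.

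That said, your sketch is a correct and complete outline of the standard proof, and is essentially the argument in Taylor's thesis. The three ingredients---perturbing the imaginary-axis resolvent bound into a strip $|\Re\lambda|<b\,|\Im\lambda|^{\mu}$ via Neumann series, representing $S(t)$ by a Dunford integral over a contour $\Gamma$ bending into the left half-plane as $\Re\lambda=-b\,|\Im\lambda|^{\mu}$, and then estimating $\int_{0}^{\infty}\tau^{\,n-\mu}e^{-bt\tau^{\mu}}\,d\tau$ by the substitution $u=\tau^{\mu}$ and Stirling---are exactly right, and your final absorption of the geometric constant $\Theta^{n}$ into $\theta^{n}(n!)^{\delta}$ using $\delta>1/\mu$ is the clean way to reach Definition~\ref{defGreve}. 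One small point worth making explicit: the hypothesis \eqref{condClasG} only controls the resolvent for $|\lambda|$ large, so near $\Im\lambda=0$ you must route $\Gamma$ through $\{\Re\lambda>0\}$ (where boundedness of $S$ guarantees $\lambda\in\rho(A)$); you say this, but it is the reason the statement needs the boundedness assumption and not merely \eqref{condClasG}.
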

\begin{rem}
 The Gevrey  rate $\dfrac{1}{\mu}>1$ 'measures' the degree of  divergence of its  power series. 
\end{rem}
    
\begin{rem}    
    The Gevrey semigroup is equivalent to semigroup of class $D^p$ ($p=\frac 1\mu$) defined in \cite{Crand-Pazy}. 
    Theorem \ref{defGe} is equivalent to Theorems 2.1, 2.2, 2.3 of \cite{Crand-Pazy}. Thus, the Gevrey
class semigroup has a stronger regularity property than the differentiable semigroup,
but is weaker than the analytic semigroup  (e.g., \cite{pazy,taylor}).
\end{rem}
Our  main result  is to show that the semigroup ${(S(t))}_{t\ge0}$, associated with model \eqref{probPuro}, is of  Gevrey class $4$. This implies several important results, first that the semigroup is  immediately differentiable, which 
implies the instantaneous smoothing effect property on the initial data over $D(\mathcal{A}^\infty)$. 
Moreover, our proof implies that the semigroup is exponentially stable and that the type of the semigroup is equal to the upper bound of  spectrum of its infinitesimal generator $\mathcal{A}$. That is, the semigroup enjoys the  linear stability property. Finally, the semigroup is instantaneous uniformly continuous operator.

The remaining part of this manuscript is organizing as follow. In Section~\ref{sec-C_0_semigroup} we provide some
notation and standing assumptions, then we establish  the well posedness of the model by showing that  a $C_0$ semigroup of contractions is found on a well-defined Hilbert space $\mathcal{H}$. 
In  Section~\ref{sec-Gevrey} we show the main result of this paper:  the semigroup is of  Gevrey class $4 $.

\section{Preliminaries and semigroup setting}\label{sec-C_0_semigroup}
To use the semigroup approach, we switch to a transmission problem, and, to do that, we consider 
$$
I:=(0,\xi_1)\cup(\xi_1,\ell_0)\cup(\ell_0,\xi_2)\cup(\xi_2,\ell_1)\cup(\ell_1,\ell).
$$ 
To facilitate notations, let us introduce 
\newcommand{\salto}[2]{[\![#1]\!]_{\xi_{#2}}}
 the  jump of $f$ in $\xi$ as 
$$
\salto{f}{}:=f(\xi^+)-f(\xi^-).
$$
From now on and without loss of generality, we assume $\rho \equiv 1$. Therefore, in this case, it is easy to see that system \eqref{probPuro} is equivalent to
\begin{eqnarray}
u_{tt} + \left[\alpha(x) u_{xx} + \kappa(x) u_{txx}\right]_{xx}&= 0 \quad \text{in $ I \times (0,T)$,}\label{eq21}
\end{eqnarray}
satisfying the same initial and boundary conditions written in  \eqref{probPuro}
and supplemented with 
 the transmission  conditions  
\begin{eqnarray}\label{wxxO}
&& \salto{u}{1}=\salto{u_x}{1}=\salto{u_{xx}}{1}=0,\quad \salto{u}{2}=\salto{u_x}{2}=0,\\
\noalign{\medskip}
\label{wxx}
&& \salto{\alpha u_{xxx}}{1}=-\gamma_1u_t(\xi_1,t),\quad  \salto{\alpha u_{xxx}}{2}=-\gamma_2u_{t}(\xi_2,t),\quad  \salto{\alpha u_{xx}}{2}=\gamma_3u_{xt}(\xi_2,t),
\end{eqnarray}
together with \eqref{xxx}.

   Let the energy functional $\mathcal{E}$, associated with the system \eqref{eq21}--\eqref{wxx}, be given by	
    $$
        \mathcal{E}(t) = \frac{1}{2}\int_{0}^{\ell}\left(|u_{t}|^{2} + \alpha|u_{xx}|^{2}\right)dx.
    $$
   { A straightforward calculation yields:
\[
\frac{d}{dt} \mathcal{E}(t) = -\int_{\ell_0}^{\ell_1} \alpha_0 |u_{xxt}|^2 \, dx - \gamma_1 |u_t(\xi_1, t)|^2 - \gamma_2 |u_t(\xi_2, t)|^2 - \gamma_3 |u_{xt}(\xi_2, t)|^2.
\]

We observe that the coefficient \(\kappa(x) = \alpha_0>0\) in the interval \((\ell_0, \ell_1)\) and vanishes elsewhere.
}

Moreover, note that problem \eqref{probPuro}--\eqref{xxx} is equivalent to \eqref{eq21}--\eqref{wxx} because the corresponding weak formulations (variational equations) are the same. 

    Let us define the phase space $\mathcal{H}$ by
$$
\mathcal{H}:=H_0^2(0,\ell)\times L^2(0,\ell).
$$
 It is easy to see that $\mathcal{H}$ is a Hilbert space with the norm
		\begin{align*}
    		\|U\|_{\mathcal{H}}^{2} = \int_{0}^{\ell}\left(|v|^{2} + \alpha|u_{xx}|^{2}\right)dx,
		\end{align*}
    where  $U = (u,v)^{\top}\in \mathcal{H}$ with $v := u_t$. 
 Therefore system \eqref{eq21}--\eqref{wxx}  can be written
in the form of an abstract first-order evolutionary Cauchy problem
\begin{align}\label{semigroup 3.1}
\begin{array}{rc}
		 U_t =  \mathcal{A}U,\quad U(0) =  U_0,
	\end{array} 
\end{align}   
   where $U_0=(u_0, u_1)^\top$ and the operator $\mathcal{A}\colon \mathcal{D}  (\mathcal{A})\subset \mathcal{H}\to\mathcal{H}$ is given by
		\begin{align}\label{opA}
    		\mathcal{A}U =
    		\begin{pmatrix}
        		v\\
        		-\left(\alpha u_{xx}+\kappa v_{xx}\right)_{xx} 
    		\end{pmatrix}.
		\end{align}
Here we consider the same boundary conditions written in  \eqref{probPuro}
\begin{eqnarray}\label{DwxxO1}
 u(0) = u_x(0) = u(\ell) = u_x(\ell) = 0,
\end{eqnarray}
the transmission conditions due to the material
\begin{eqnarray}\label{DwxxO2}
         M(\ell_i^-) = M(\ell_i^+),\quad M_x(\ell_i^-) = M_x(\ell_i^+),
\end{eqnarray}
and, under the above notation, the transmission conditions due to the pointwise sources, namely 
\begin{eqnarray}
&&\salto{u_{xx}}{1}=0,  \quad \salto{\alpha u_{xxx}}{1}=-\gamma_1v(\xi_1),\quad  \\
\noalign{\medskip}
&&\salto{\alpha u_{xxx}}{2}=-\gamma_2v(\xi_2),\quad  \salto{\alpha u_{xx}}{2}=\gamma_3v_{x}(\xi_2). \label{DwwO3}
\end{eqnarray}
The domain of the operator $\mathcal{A}$ is
		\begin{align*}
    		\mathcal{D}(\mathcal{A}) = \left\lbrace U = (u,v)^{\top}\in \mathcal{H}\colon u,v\in H^{2}_{0}(0,\ell),\, M \in H^{2}(I),\, \text{verifying \eqref{DwxxO2}--\eqref{DwwO3}}\right\rbrace.
		\end{align*}
		Over the above conditions, we have that  $\mathcal{A}$ is dissipative, that is 
    \begin{equation}
    	\mbox{\rm Re} \left \langle \mathcal{A}U,U \right \rangle_{\mathcal{H}}
    	 = -\int_{\ell_0}^{\ell_1}\alpha_0|v_{xx}(x)|^{2} dx-\gamma_1|v(\xi_1)|^2-\gamma_2|v(\xi_2)|^2 -\gamma_3|v_{x}(\xi_2)|^2  \leq  0 .\label{dissi}
    \end{equation}

For $\lambda \in \mathbb{R}$, $U\in  D(\mathcal{A})$ and $F\in  \mathcal{H}$ the resolvent equation is given by 
\begin{equation}\label{Resolv}
i\lambda U - \mathcal{A}U = F,
\end{equation}
that, in terms of the components,  can be written as 
    		\begin{eqnarray}\label{reso}
    \begin{array}{ll}
		\label{Eqrtt1}	i \lambda u - v  = f_{1} & \text{in $H^{2}(0,\ell)$},\\
\noalign{\medskip}
		\label{Eqrtt2}	i \lambda  v + \left[\alpha(x) u_{xx} + \kappa v_{xx}\right]_{xx}  =  f_{2}  & \text{in $L^{2}(0, \ell)$} ,
		\end{array}
\end{eqnarray}
		verifying \eqref{DwxxO1}--\eqref{DwwO3}. 
Multiplying \eqref{Resolv} by $U$, taking real part and using \eqref{dissi}, we arrive to 
    \begin{equation}\label{uno}
\int_{\ell_0}^{\ell_1}\alpha_0|v_{xx}(x)|^{2} dx+\gamma_1|v(\xi_1)|^2+\gamma_2|v(\xi_2)|^2 +\gamma_3|v_{x}(\xi_2)|^2 = \mbox{\rm Re} \left \langle F,U \right \rangle_{\mathcal{H}}.
    \end{equation}
    
 Let  $\sigma(\mathcal{A})$ be the spectrum of the operator $\mathcal{A}$, and  $\varrho(\mathcal{A}):=\mathbb{C}\setminus \sigma(\mathcal{A}) $  is the resolvent set of $\mathcal{A}$.   
 To show that $\mathcal{A}$ is the infinitesimal generator of a contraction semigroup
we use the following result.
\begin{thm}
  \label{TDenso}
Let $\mathcal{A}$ be dissipative with $0\in\varrho(\mathcal{A}).$  If $\mathcal{H}$ is reflexive then
$\mathcal{A}$ is the infinitesimal generator of a semigroup of contractions.
\end{thm}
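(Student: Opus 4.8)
The plan is to invoke the Lumer--Phillips theorem: once we know that $\mathcal{A}$ is densely defined, dissipative, and that $R(\lambda I-\mathcal{A})=\mathcal{H}$ for some $\lambda>0$, that theorem yields at once that $\mathcal{A}$ generates a $C_{0}$-semigroup of contractions. Dissipativity is assumed, so only two points need to be addressed, and of these the density of $D(\mathcal{A})$ is the substantive one. First I would record the elementary consequences of $0\in\varrho(\mathcal{A})$: the operator $\mathcal{A}$ is closed and $\mathcal{A}^{-1}\in\mathcal{L}(\mathcal{H})$. Since $\varrho(\mathcal{A})$ is open there is $\delta>0$ with $(0,\delta)\subset\varrho(\mathcal{A})$; dissipativity gives the a priori bound $\|(\lambda I-\mathcal{A})U\|\ge\lambda\|U\|$ for every $\lambda>0$ and $U\in D(\mathcal{A})$, hence $\|(\lambda I-\mathcal{A})^{-1}\|\le 1/\lambda$ on $\varrho(\mathcal{A})\cap(0,\infty)$. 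Combining this bound with the fact that the resolvent is holomorphic on a ball of radius $1/\|(\lambda I-\mathcal{A})^{-1}\|$ around each point of $\varrho(\mathcal{A})$, a standard bootstrap (connectedness) argument extends the resolvent set to all of $(0,\infty)$; in particular $R(\lambda I-\mathcal{A})=\mathcal{H}$ for every $\lambda>0$.

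The core of the argument is the density of $D(\mathcal{A})$, and this is exactly where reflexivity of $\mathcal{H}$ enters. For $\lambda>0$ put $J_{\lambda}:=\lambda(\lambda I-\mathcal{A})^{-1}\in\mathcal{L}(\mathcal{H})$; then $\|J_{\lambda}\|\le 1$, $J_{\lambda}x\in D(\mathcal{A})$ for all $x\in\mathcal{H}$, the operators $\{J_{\mu}\}_{\mu>0}$ commute and are injective, and from the identity $J_{\lambda}x-x=(\lambda I-\mathcal{A})^{-1}\mathcal{A}x$ one gets $J_{\lambda}x\to x$ as $\lambda\to\infty$ for every $x\in D(\mathcal{A})$. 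Given an arbitrary $x\in\mathcal{H}$, the family $\{J_{\lambda}x\}_{\lambda\ge 1}$ is bounded, so by reflexivity there are $\lambda_{n}\to\infty$ and $w\in\mathcal{H}$ with $J_{\lambda_{n}}x\rightharpoonup w$. Fixing $\mu>0$ and using weak continuity of the bounded operator $J_{\mu}$, commutativity, and $J_{\mu}x\in D(\mathcal{A})$, we obtain $J_{\mu}w=\lim_{n}J_{\mu}J_{\lambda_{n}}x=\lim_{n}J_{\lambda_{n}}(J_{\mu}x)=J_{\mu}x$; since $J_{\mu}$ is injective this forces $w=x$. Hence $x$ is a weak limit of elements of the subspace $D(\mathcal{A})$, so by Mazur's lemma $x\in\overline{D(\mathcal{A})}$, and therefore $\overline{D(\mathcal{A})}=\mathcal{H}$.

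With density in hand, $\mathcal{A}$ is densely defined, dissipative, and satisfies $R(I-\mathcal{A})=\mathcal{H}$ by the first step, so the Lumer--Phillips theorem (see, e.g., \cite{pazy}) applies and $\mathcal{A}$ is the infinitesimal generator of a $C_{0}$-semigroup of contractions on $\mathcal{H}$. I expect the main obstacle to be precisely the density step: dissipativity together with the range/invertibility condition does not by itself force $D(\mathcal{A})$ to be dense (it can fail on non-reflexive Banach spaces), so reflexivity must be used essentially, and the weak-compactness and resolvent-commutation argument above is the device that exploits it. In the present Hilbert-space setting one may alternatively note that $\mathcal{A}^{-1}$ is bounded, injective and dissipative, whence $\overline{R(\mathcal{A}^{-1})}=\overline{D(\mathcal{A})}=\mathcal{H}$, but the reflexive version above is the one matching the stated hypothesis.
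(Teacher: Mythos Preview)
Your proposal is correct and follows essentially the same route as the paper: both argue that $0\in\varrho(\mathcal{A})$ together with dissipativity forces $(0,\infty)\subset\varrho(\mathcal{A})$, then obtain density of $D(\mathcal{A})$ from reflexivity, and conclude via Lumer--Phillips. The only difference is that the paper simply cites \cite[Theorem~4.6]{pazy} for the density step, whereas you unpack that theorem explicitly via the Yosida approximations $J_\lambda$, weak sequential compactness, commutativity and injectivity of the resolvent, and Mazur's lemma; your version is thus more self-contained but not a different argument.
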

\begin{proof}
Since $\varrho(\mathcal{A})$ is an open set we have that there exists $\epsilon>0$ such that $\epsilon \in \varrho(\mathcal{A})$. This implies that  any $\lambda>0$ belongs to $ \varrho(\mathcal{A})$, in particular we have that
$R(I-\mathcal{A}) = \mathcal{H}$. Using  \cite[Theorem 4.6]{pazy},
we conclude that $\overline{D(\mathcal{A})} = \mathcal{H}$. Applying Lummer-Phillips Theorem \cite[Theorem~1.4.3]{pazy}, our conclusion follows.
\end{proof}

So,
we obtain that
\begin{thm}
  The operator $\mathcal{A}$  is the infinitesimal generator of  a $C_0$-semigroup, $e^{\mathcal{A} t}$, of contractions on $\mathcal{H}$ . Moreover, for any $U_0 \in  \mathcal{H}$  the solution 
  $U \in  C(0, T ; \mathcal{H} )$. If $U_0 \in  \mathcal{D}(\mathcal{A})$,
then $U \in  C^1(0, T ; \mathcal{H}  ) \cap  C(0, T ; \mathcal{D}(\mathcal{A}))$.
    \end{thm}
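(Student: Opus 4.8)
The plan is to deduce all three assertions from the abstract Theorem~\ref{TDenso} combined with the classical theory of $C_0$-semigroups, so that the genuine work is confined to verifying the hypotheses of that theorem. The phase space $\mathcal{H}=H^2_0(0,\ell)\times L^2(0,\ell)$ is a Hilbert space, hence reflexive, and dissipativity of $\mathcal{A}$ has already been recorded in \eqref{dissi}. Thus everything reduces to the single claim that $0\in\varrho(\mathcal{A})$: once that is established, Theorem~\ref{TDenso} produces the $C_0$-semigroup of contractions $e^{\mathcal{A}t}$, and then the assertions $U\in C(0,T;\mathcal{H})$ for $U_0\in\mathcal{H}$ and $U\in C^1(0,T;\mathcal{H})\cap C(0,T;\mathcal{D}(\mathcal{A}))$ for $U_0\in\mathcal{D}(\mathcal{A})$ are the standard regularity properties of orbits of a $C_0$-semigroup and of orbits starting in the domain of its generator (see \cite{pazy}).

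Injectivity of $\mathcal{A}$ is immediate: if $\mathcal{A}U=0$, then $v=0$ and $(\alpha u_{xx})_{xx}=0$ on $I$; pairing the latter identity with $u$ in $L^2(0,\ell)$ and integrating by parts twice, every boundary term vanishes by \eqref{DwxxO1} and every interface term at $\xi_1,\xi_2,\ell_0,\ell_1$ vanishes by \eqref{DwxxO2}--\eqref{DwwO3} (recall $v\equiv0$), leaving $\int_0^\ell\alpha|u_{xx}|^2\,dx=0$ and hence $u\equiv0$. For surjectivity with a bounded inverse, given $F=(f_1,f_2)^\top\in\mathcal{H}$ I would solve $\mathcal{A}U=-F$ as follows. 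The first component yields $v=-f_1\in H^2_0(0,\ell)$ explicitly; this is precisely the reason for working with the resolvent at $\lambda=0$, since then the interface data $v(\xi_1)$, $v(\xi_2)$, $v_x(\xi_2)$ occurring in \eqref{DwwO3} are already known, and finite because $H^2_0(0,\ell)\hookrightarrow C^1([0,\ell])$. Substituting $v=-f_1$ into the second component reduces the problem to the fourth-order stationary transmission problem
\[
(\alpha u_{xx})_{xx}=f_2+(\kappa f_{1,xx})_{xx}\quad\text{on }I,
\]
subject to the clamped conditions \eqref{DwxxO1}, the material transmission conditions \eqref{DwxxO2}, and the source transmission conditions \eqref{DwwO3} at $\xi_1,\xi_2$.

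I would solve this stationary problem variationally. Testing against $w\in H^2_0(0,\ell)$ and integrating by parts twice on each subinterval of $I$, the interface contributions are fixed by \eqref{DwxxO2}--\eqref{DwwO3} and carried to the right-hand side, which leads to the problem: find $u\in H^2_0(0,\ell)$ such that
\[
a(u,w):=\int_0^\ell\alpha\,u_{xx}\,\overline{w_{xx}}\,dx=\mathcal{F}(w)\qquad\text{for all }w\in H^2_0(0,\ell),
\]
where $\mathcal{F}$ is the conjugate-linear functional assembled from $f_2$, from the $L^2$-function $\kappa f_{1,xx}$ (note that the a~priori distributional term $(\kappa f_{1,xx})_{xx}$ hence lies in $H^{-2}(0,\ell)=(H^2_0(0,\ell))'$), and from the point data of \eqref{DwwO3}, which are controlled via $H^2\hookrightarrow C^1$. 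The form $a$ is bounded and, by the Poincar\'e-type inequality $\|w\|_{H^2}\le C\|w_{xx}\|_{L^2}$ on $H^2_0(0,\ell)$ together with $\alpha\ge\min\{\alpha_1,\alpha_2\}>0$, coercive; since $\mathcal{F}$ is bounded, Lax-Milgram yields a unique $u$ with $\|u\|_{H^2}\le C\|F\|_{\mathcal{H}}$. It then remains to upgrade the weak solution to an element of $\mathcal{D}(\mathcal{A})$: bootstrapping on each subinterval of $I$, where $\kappa$ is constant and $M=\alpha u_{xx}+\kappa v_{xx}$ satisfies $M_{xx}=f_2\in L^2$, one obtains $M\in H^2(I)$, and testing the variational identity against functions localized near each interface reproduces \eqref{DwxxO2}--\eqref{DwwO3}. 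Hence $U=(u,v)^\top\in\mathcal{D}(\mathcal{A})$ and $\mathcal{A}U=-F$; combined with injectivity and the Lax-Milgram bound this gives $0\in\varrho(\mathcal{A})$, and Theorem~\ref{TDenso} together with \cite{pazy} completes the proof.

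I expect the stationary step above to be the main obstacle: setting up the variational formulation so that the discontinuous coefficient $\kappa$ and the Dirac-type interface data at $\xi_1,\xi_2$ (whose ``bad'' contribution $(\kappa f_{1,xx})_{xx}$ is merely an element of $(H^2_0)'$) are faithfully encoded as a bounded functional on $H^2_0(0,\ell)$, and then carrying out the interface-by-interface regularity argument carefully enough to recover $M\in H^2(I)$ and the full set of transmission conditions \eqref{DwxxO2}--\eqref{DwwO3}, rather than just a weak solution. Everything else---coercivity via Poincar\'e, density of $\mathcal{D}(\mathcal{A})$ from reflexivity (already built into Theorem~\ref{TDenso}), and the $C^0/C^1$ regularity of orbits---is routine.
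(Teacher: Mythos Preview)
Your proposal is correct and follows essentially the same route as the paper: reduce everything to Theorem~\ref{TDenso} by verifying $0\in\varrho(\mathcal{A})$, obtain $v=-f_1$ from the first component, and then solve the resulting fourth-order stationary transmission problem for $u$. The paper is terser, simply writing the reduced equation and invoking ``standard procedures'' with references to \cite{LiuChen,LiuLiu} where you spell out the Lax--Milgram argument, injectivity, and the interface-by-interface regularity bootstrap, but the underlying strategy is identical.
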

\begin{proof}
Since $\mathcal{A}$ is dissipative and $\mathcal{H}$ is reflexive, by Theorem \ref{TDenso}, 
it is enough to prove that $0\in\varrho(\mathcal{A}).$
In fact, we show that for any  $F=(f_1,f_2)^\top\in \mathcal{H}$, there exists only one  $U\in D(\mathcal{A})$ such that $-\mathcal{A}U=F$,	
From \eqref{reso}$_1$  we have $-v = f_1 \in H^2_0$. The second component of the system is of the form $-\alpha u_{xxxx}= f_2+\alpha_0 f_{1,xxxx}$.
Using standard procedures (see, e.g., \cite{LiuChen,LiuLiu}) we can conclude that $U=(u,v)^\top\in D(\mathcal{A})$.
Finally, the density of $\mathcal{D}(\mathcal{A})$ follows from \cite[Theorem 4.6]{pazy}.
\end{proof}

\section{Gevrey class}\label{sec-Gevrey}
{ 
In this section, we show that the semigroup generated by the system \eqref{probPuro} belongs to the Gevrey class of order $\delta > 4$.

The main difficulty in establishing the Gevrey regularity of the semigroup stems from the lack of suitable observability inequalities within the viscous region. In particular, it is not possible to estimate the boundary terms at the endpoints of the viscous interval solely in terms of the viscous dissipation. To overcome this obstacle, we employ Lemma~\ref{Funda}, which provides a crucial relation between the second- and third-order derivatives at the interfaces of the elastic subintervals. This relation allows us to recover the necessary boundary estimates and thus to control the elastic component of the solution effectively.

Our analysis is carried out on the subintervals $(0, \xi_1)$, $(\xi_1, \ell_0)$, $(\ell_0, \xi_2)$, and $(\xi_2, \ell_1)$. Unless otherwise stated, the notation $\|\cdot\|$ refers to the $L^2$-norm over the corresponding interval, while $\|\cdot\|_{H^m}$ denotes the $H^m$-norm on that same domain.

In developing our arguments, we make use of several classical results from functional analysis and Sobolev space theory, as presented in Adams~\cite{adams}.
}


       \begin{lemma}[{\cite[Theorem 5.8, p. 139]{adams}}]\label{desDI}
        Let $a,b \in \mathbb{R}$. If $w, w^{(m)} \in L^2(a,b)$, then, $w^{(j )} \in L^2(a, b)$, for $j =1,\ldots , m$, where $w^{(j )} = \dfrac{d^j}{dx^j} w$. Then, 
        \begin{eqnarray*}
            \|w^{(j)}\|_{L^2} \leq C\|w\|_{L^2} + C \|w\|^{1-\frac{j}{m}}_{L^2}\|w^{(m)}\|^{\frac{j}{m}}_{L^2}. \ \ 
        \end{eqnarray*}
    \end{lemma}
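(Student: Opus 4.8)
The plan is to prove the classical Gagliardo--Nirenberg interpolation inequality for intermediate derivatives on a bounded interval. Since $a,b$ are fixed, the constants are allowed to depend on $L:=b-a$ and on $m$. The argument has three stages: a regularity reduction, the base case $m=2$, $j=1$, and an iteration.

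First I would dispose of the regularity assertion. If $w,w^{(m)}\in L^2(a,b)$, then $w^{(m)}\in L^1(a,b)$ is the distributional derivative of $w^{(m-1)}$, so $w^{(m-1)}$ coincides a.e.\ with an absolutely continuous — hence bounded, hence square-integrable — function on $[a,b]$; iterating this observation downward gives $w^{(j)}\in L^2(a,b)$ for $j=1,\dots,m$, i.e.\ $w\in H^m(a,b)$. By density it then suffices to prove the estimate for $w\in C^m([a,b])$.

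Next comes the base case $m=2$, $j=1$, from which the rest follows. On a subinterval $J\subset[a,b]$ of length $h$, an averaging argument produces points $x_1,x_2$ in the left and right thirds of $J$ with $|w(x_i)|\le(3/h)^{1/2}\|w\|_{L^2(J)}$ and $x_2-x_1\ge h/3$. Taylor's formula with integral remainder,
\[
w'(x_1)=\frac{w(x_2)-w(x_1)}{x_2-x_1}-\frac{1}{x_2-x_1}\int_{x_1}^{x_2}(x_2-s)\,w''(s)\,ds,
\]
gives $|w'(x_1)|\le C h^{-3/2}\|w\|_{L^2(J)}+C h^{1/2}\|w''\|_{L^2(J)}$; since $|w'(x)|\le|w'(x_1)|+\int_J|w''|$ for $x\in J$, integrating over $J$ yields $\|w'\|_{L^2(J)}\le C\bigl(h^{-1}\|w\|_{L^2(J)}+h\|w''\|_{L^2(J)}\bigr)$. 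Applying this on each piece of a partition of $(a,b)$ into $N$ equal subintervals and summing the squares produces
\[
\|w'\|_{L^2(a,b)}\le C\Bigl(\frac{N}{L}\,\|w\|_{L^2(a,b)}+\frac{L}{N}\,\|w''\|_{L^2(a,b)}\Bigr);
\]
choosing $N$ to balance the two summands — and $N=1$ when the optimal choice would fall below $1$ — gives the base estimate $\|w'\|_{L^2(a,b)}\le C\|w\|_{L^2(a,b)}+C\|w\|_{L^2(a,b)}^{1/2}\|w''\|_{L^2(a,b)}^{1/2}$, equivalently $\|w'\|_{L^2(a,b)}\le\epsilon\|w''\|_{L^2(a,b)}+C_\epsilon\|w\|_{L^2(a,b)}$ for every $\epsilon\in(0,1]$.

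Finally I would iterate. Applying the base case to $w^{(k)}$ (with $(w^{(k)})''=w^{(k+2)}$) gives $\|w^{(k+1)}\|_{L^2(a,b)}\le\epsilon\|w^{(k+2)}\|_{L^2(a,b)}+C_\epsilon\|w^{(k)}\|_{L^2(a,b)}$ for $k=0,\dots,m-2$; chaining these relations and readjusting $\epsilon$ at each stage leads to $\|w^{(j)}\|_{L^2(a,b)}\le\epsilon\|w^{(m)}\|_{L^2(a,b)}+C\,\epsilon^{-j/(m-j)}\|w\|_{L^2(a,b)}$ for every $\epsilon\in(0,1]$, and optimizing in $\epsilon$ (taking $\epsilon\sim(\|w\|_{L^2(a,b)}/\|w^{(m)}\|_{L^2(a,b)})^{(m-j)/m}$, or $\epsilon=1$ when that exceeds $1$) yields the asserted bound $\|w^{(j)}\|_{L^2}\le C\|w\|_{L^2}+C\|w\|_{L^2}^{1-j/m}\|w^{(m)}\|_{L^2}^{j/m}$. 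The analytic input (Taylor's formula and the subdivision in the base case) is elementary; the part that needs care is this last stage — propagating the one-step inequality through all intermediate orders so that a single constant, depending only on $m$ and $b-a$, survives, and then recovering the \emph{interpolated} rather than merely additive form by the final optimization. Alternatively, one may simply quote \cite[Theorem~5.8]{adams}, as in the statement.
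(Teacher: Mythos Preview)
Your proof is correct and complete. The paper, however, does not prove this lemma at all: it is stated with a direct citation to \cite[Theorem~5.8]{adams} and used as a black box, with no argument supplied. So there is nothing to compare at the level of technique --- you have written out a standard self-contained proof (regularity via successive antidifferentiation, the $m=2$ base case by Taylor expansion plus subdivision, then iteration and optimization in~$\epsilon$) where the authors are content to quote the reference, exactly the alternative you yourself mention in your last line.
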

    \begin{lemma}[{\cite[Theorem 5.9, p. 140]{adams}}] \label{desGN}
        Let $w \in H^2(a,b)$ and $s \in [a,b]$. Then we have, 
        \begin{eqnarray*}
            |w(s)| \leq  C\|w\|_{L^2} + C\|w\|^{\frac{3}{4}}_{L^2}\|w_{xx}\|^{\frac{1}{4}}_{L^2}  
            \ \ \text{and} \ \
            |w_{x}(s)| \leq C\|w\|_{L^2} + C \|w\|^{\frac{1}{4}}_{L^2}\|w_{xx}\|^{\frac{3}{4}}_{L^2}.
        \end{eqnarray*}
    \end{lemma}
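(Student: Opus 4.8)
The plan is to obtain both pointwise bounds by combining an elementary ``fundamental theorem of calculus plus mean value'' argument with the interpolation inequality already recorded in Lemma~\ref{desDI}. Note first that $w\in H^2(a,b)\hookrightarrow C^1([a,b])$, so all the pointwise values below make sense.

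I would start with $|w(s)|$. By the mean value property of the integral there is $\sigma\in(a,b)$ such that $|w(\sigma)|^2\le (b-a)^{-1}\|w\|_{L^2}^2$. Writing
\[
w(s)^2=w(\sigma)^2+\int_\sigma^s (w^2)_x\,dx=w(\sigma)^2+2\int_\sigma^s w\,w_x\,dx
\]
and using Cauchy--Schwarz we get $w(s)^2\le (b-a)^{-1}\|w\|_{L^2}^2+2\|w\|_{L^2}\|w_x\|_{L^2}$. Now I would insert Lemma~\ref{desDI} with $m=2$, $j=1$, i.e. $\|w_x\|_{L^2}\le C\|w\|_{L^2}+C\|w\|_{L^2}^{1/2}\|w_{xx}\|_{L^2}^{1/2}$, and apply Young's inequality to reduce to $w(s)^2\le C\|w\|_{L^2}^2+C\|w\|_{L^2}^{3/2}\|w_{xx}\|_{L^2}^{1/2}$. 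Taking square roots and using $\sqrt{p+q}\le\sqrt p+\sqrt q$ gives the first inequality with the exponents $3/4$ and $1/4$.

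For $|w_x(s)|$ I would run the same argument one derivative up: pick $\sigma'\in(a,b)$ with $|w_x(\sigma')|^2\le (b-a)^{-1}\|w_x\|_{L^2}^2$, so that $w_x(s)^2\le (b-a)^{-1}\|w_x\|_{L^2}^2+2\|w_x\|_{L^2}\|w_{xx}\|_{L^2}$. Substituting the squared form $\|w_x\|_{L^2}^2\le C\|w\|_{L^2}^2+C\|w\|_{L^2}\|w_{xx}\|_{L^2}$ of Lemma~\ref{desDI} into both appearances of $\|w_x\|_{L^2}$ leaves the three monomials $\|w\|_{L^2}^2$, $\|w\|_{L^2}\|w_{xx}\|_{L^2}$ and $\|w\|_{L^2}^{1/2}\|w_{xx}\|_{L^2}^{3/2}$; since $\|w\|_{L^2}\|w_{xx}\|_{L^2}=(\|w\|_{L^2}^2)^{1/3}(\|w\|_{L^2}^{1/2}\|w_{xx}\|_{L^2}^{3/2})^{2/3}$, the middle one is absorbed by weighted Young's inequality, giving $w_x(s)^2\le C\|w\|_{L^2}^2+C\|w\|_{L^2}^{1/2}\|w_{xx}\|_{L^2}^{3/2}$, and the second inequality follows by taking square roots.

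The only real work is the exponent bookkeeping: one must verify that the retained powers multiply out to exactly $3/4$--$1/4$, respectively $1/4$--$3/4$, and that each discarded monomial lies on the segment joining two retained ones so it can be dropped at the cost of enlarging $C$ (here $C=C(b-a)$, which is harmless since in the paper the subintervals are fixed). A shortcut that makes the exponents transparent, at the price of invoking a bounded extension $H^2(a,b)\to H^2(\mathbb R)$, is to use on $\mathbb R$ the identities $w(s)^2=2\int_{-\infty}^s w\,w_x$ and $\|w_x\|_{L^2}^2=-\int_{\mathbb R} w\,w_{xx}$, which chain into $|w(s)|\le\sqrt 2\,\|w\|_{L^2}^{1/2}\|w_x\|_{L^2}^{1/2}\le\sqrt 2\,\|w\|_{L^2}^{3/4}\|w_{xx}\|_{L^2}^{1/4}$ and likewise $|w_x(s)|\le\sqrt 2\,\|w\|_{L^2}^{1/4}\|w_{xx}\|_{L^2}^{3/4}$; the additive term $C\|w\|_{L^2}$ then just records the norm of the extension operator.
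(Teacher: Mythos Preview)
Your argument is correct. Both inequalities are obtained exactly as you describe: the fundamental-theorem-of-calculus identity combined with a mean-value choice of the base point gives the Agmon-type bound $|g(s)|^2\le (b-a)^{-1}\|g\|_{L^2}^2+2\|g\|_{L^2}\|g_x\|_{L^2}$ for $g=w$ and $g=w_x$, and then Lemma~\ref{desDI} with $m=2$, $j=1$ reduces everything to the two endpoint monomials. The exponent bookkeeping you flag is fine; in particular the identity $\|w\|_{L^2}\|w_{xx}\|_{L^2}=(\|w\|_{L^2}^2)^{1/3}(\|w\|_{L^2}^{1/2}\|w_{xx}\|_{L^2}^{3/2})^{2/3}$ together with Young's inequality with conjugate exponents $3$ and $3/2$ absorbs the middle term exactly as claimed.

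There is nothing to compare against: the paper does not prove Lemma~\ref{desGN} but simply quotes it from Adams~\cite{adams}, so your write-up is strictly more than what the paper provides. Your proof is essentially the standard one behind the cited theorem (Gagliardo--Nirenberg on a bounded interval), and the alternative route via a bounded $H^2$-extension to $\mathbb{R}$ that you sketch at the end is also a legitimate and well-known shortcut.
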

Our starting point is to show the strong
stability, which we establish in the following Lemma.
    \begin{lemma}\label{Imag} 
    Let $\mathcal{A}$ be the infinitesimal generator given by \eqref{opA} then 
 $\varrho(\mathcal{A}) \supseteq \{ i \lambda; \ \lambda \in \mathbb{R} \} \equiv i \mathbb{R}.$
      \end{lemma}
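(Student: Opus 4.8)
The plan is to show that the set $\Lambda:=\{\beta\in\mathbb R:i\beta\in\varrho(\mathcal A)\}$, which is open and contains $0$ (recall $0\in\varrho(\mathcal A)$ was already established), is closed in $\mathbb R$, hence equal to $\mathbb R$. I would reduce this, in the usual way, to two facts: (i) $i\mathbb R$ contains no eigenvalue of $\mathcal A$; and (ii) whenever $\beta_n\to\beta^\ast$ with $i\beta_n\in\varrho(\mathcal A)$ and $U_n=(u_n,v_n)\in D(\mathcal A)$ satisfy $\|U_n\|_{\mathcal H}=1$ and $(i\beta_n-\mathcal A)U_n\to 0$ in $\mathcal H$, one necessarily has $\|U_n\|_{\mathcal H}\to 0$. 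Fact (ii) contradicts the normalization and so rules out any boundary point $i\beta^\ast$ of $i\Lambda$; since $0\in\varrho(\mathcal A)$ I may assume $\beta^\ast\neq 0$ throughout.

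For (i), I would take $\mathcal A U=i\beta U$ with $\beta\neq 0$ and $U=(u,v)$, and use \eqref{dissi} (the real part of $\langle\mathcal A U,U\rangle_{\mathcal H}$ vanishes) to get $v_{xx}\equiv 0$ on $(\ell_0,\ell_1)$ and $v(\xi_1)=v(\xi_2)=v_x(\xi_2)=0$. Since $v=i\beta u$ and $\beta\neq 0$, $u$ is affine on $(\ell_0,\ell_1)$ — the jump of $u_{xx}$ at $\xi_2$ being a multiple of $v_x(\xi_2)=0$ — and the second resolvent equation on $(\ell_0,\ell_1)$ collapses to $\beta^2u=0$, so $u\equiv v\equiv 0$ there. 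The transmission conditions \eqref{xxx} and \eqref{DwxxO2}--\eqref{DwwO3} then push vanishing Cauchy data $u=u_x=u_{xx}=u_{xxx}=0$ to $\ell_0^-$ and to $\ell_1^+$ (on the elastic side $M=\alpha_1u_{xx}$), the jump terms at $\xi_1$ being multiples of $v(\xi_1)=0$. Uniqueness for the fourth-order ODE $\alpha_1u_{xxxx}=\beta^2u$ on each elastic subinterval, iterated across $\xi_1$, yields $u\equiv 0$ on $(0,\ell)$, hence $U=0$.

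For (ii), identity \eqref{uno} gives $v_{n,xx}\to 0$ in $L^2(\ell_0,\ell_1)$ and $v_n(\xi_1),v_n(\xi_2),v_{n,x}(\xi_2)\to 0$. Using $v_n=i\beta_n u_n-f_{1,n}$ and $\beta_n\to\beta^\ast\neq 0$, this forces $u_{n,xx}\to 0$ in $L^2(\ell_0,\ell_1)$ and $u_n(\xi_1),u_n(\xi_2),u_{n,x}(\xi_2)\to 0$; integrating twice outward from $\xi_2$ then yields $u_n\to 0$ and $u_{n,x}\to 0$ uniformly on $(\ell_0,\ell_1)$. On the viscoelastic subintervals the bending moment is $M_n=(\alpha_2+i\alpha_0\beta_n)u_{n,xx}-\alpha_0 f_{1,n,xx}\to 0$ in $L^2$, while the second resolvent equation gives $M_{n,xx}=\beta_n^2u_n+i\beta_n f_{1,n}+f_{2,n}\to 0$ in $L^2(\ell_0,\xi_2)$ and in $L^2(\xi_2,\ell_1)$; by interpolation ($\|M_{n,x}\|_{L^2}$ controlled via Lemma~\ref{desDI}) one gets $M_n\to 0$ in $H^2(\ell_0,\xi_2)$ and in $H^2(\xi_2,\ell_1)$, and by Lemma~\ref{desGN} the interface values $M_n(\ell_0^+),M_{n,x}(\ell_0^+),M_n(\ell_1^-),M_{n,x}(\ell_1^-)\to 0$. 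Through \eqref{DwxxO2} and $M=\alpha_1u_{xx}$ on the elastic parts this gives $u_{n,xx}\to 0$ and $u_{n,xxx}\to 0$ at $\ell_0^-$ and at $\ell_1^+$; together with the decay of $u_n,u_{n,x}$ there, all four Cauchy data of $u_n$ at $\ell_0^-$ and at $\ell_1^+$ tend to $0$. A Gronwall/continuous-dependence estimate for $\alpha_1u_{xxxx}-\beta_n^2u=f_{2,n}+i\beta_n f_{1,n}\to 0$ then yields $u_n\to 0$ in $H^4(\xi_1,\ell_0)$ and $H^4(\ell_1,\ell)$, and propagating across $\xi_1$ (the jump of $\alpha_1u_{n,xxx}$ being $\gamma_1v_n(\xi_1)\to 0$) gives $u_n\to 0$ in $H^4(0,\xi_1)$. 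Altogether $\|u_{n,xx}\|_{L^2(0,\ell)}\to 0$ and $v_n=i\beta_n u_n-f_{1,n}\to 0$ in $L^2(0,\ell)$, so $\|U_n\|_{\mathcal H}\to 0$, the desired contradiction.

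The main obstacle will be the propagation in step (ii): there is no dissipative term on the elastic subintervals, so the decay of $u_n$ there has to be recovered entirely from the interface data, and obtaining the decay of all four Cauchy data at $\ell_0$ and $\ell_1$ requires exploiting the $H^2(I)$-regularity of the moment $M$ carefully — this is where the trace and interpolation inequalities of Lemmas~\ref{desDI}--\ref{desGN} and the precise structure of the transmission conditions come in. The interior singular points $\xi_1,\xi_2$, where $u_{xx}$ and $u_{xxx}$ jump, cause no additional trouble only because those jumps are controlled by the velocity traces that \eqref{uno} already forces to zero; this is precisely the lack of observability in the viscous region flagged at the beginning of the section.
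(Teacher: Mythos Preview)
Your proof is correct and complete; it differs from the paper's in how step (ii) is handled. The paper runs the same contradiction set-up (a normalized sequence $U_n$ with $(i\lambda_n-\mathcal A)U_n\to 0$), but instead of propagating smallness directly it uses compactness: from $\|\mathcal A U_n\|_{\mathcal H}\leq C$ one gets $U_n$ bounded in $D(\mathcal A)$, extracts a subsequence $U_n\to U$ strongly in $\mathcal H$, invokes closedness of $\mathcal A$ to obtain $i\lambda U=\mathcal A U$, and then applies exactly your step (i) to conclude $U=0$, contradicting $\|U\|_{\mathcal H}=1$. Your route instead tracks the decay explicitly: from \eqref{uno} you kill $u_n,u_{n,x},u_{n,xx}$ on $(\ell_0,\ell_1)$, push $M_n\to 0$ in $H^2$ on each viscoelastic subinterval via $M_{n,xx}=f_{2,n}-i\beta_n v_n$, transfer all four Cauchy data across $\ell_0,\ell_1$ by the transmission conditions, and finish with ODE continuous dependence on each elastic piece (the jump at $\xi_1$ being harmless since $v_n(\xi_1)\to 0$). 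The paper's argument is shorter and reduces everything to the eigenvalue problem, at the price of the compact-embedding step (which is genuine but not spelled out in detail there); your argument avoids compactness entirely and, as you note in your last paragraph, rehearses precisely the interface-propagation mechanism that drives the quantitative Gevrey estimates later in the section.
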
    
        \begin{proof}
                Let us denote by 
\begin{equation*}
\mathcal{N}=\{s\in \mathbb{R}^+\colon (-is,\,is)\subset \varrho(\mathcal{A})\}.
\end{equation*}
Since $0\in \varrho(\mathcal{A})$, $\mathcal{N}\neq \varnothing$. 
Putting $\lambda=\sup \mathcal{N}$, we verify that if $\lambda=+\infty$ it follows that $i\mathbb{R}\subseteq \varrho(\mathcal{A})$.
By contradiction, suppose  that  $\lambda<\infty$, then there exists a sequence $\{\lambda_n\}\subseteq \mathbb{R}$ such that $ \lambda_n\to\lambda< \infty$ and
$$\|(i\lambda_nI-\mathcal{A})^{-1}\|_{\mathcal{L}(\mathcal{H})}\to \infty.$$
Hence, there exists a sequence $\{f_n\}\subseteq \mathcal{H}$ verifying  $\|f_n\|_{\mathcal{H}}=1$ and $\|(i\lambda_nI-\mathcal{A})^{-1}f_n\|_{\mathcal{H}}\to \infty$. Denoting by
$$
\tilde{U}_n=(i\lambda_nI-\mathcal{A})^{-1}f_n\quad \Rightarrow \quad f_n=i\lambda_n\tilde{U}_n-\mathcal{A}\tilde{U}_n,
$$
and $U_n=\dfrac{\tilde{U}_n}{\|\tilde{U}_n\|_{\mathcal{H}}}$, $F_n=\dfrac{f_n}{\|\tilde{U}_n\|_{\mathcal{H}}}$, we conclude that $U_n$ verifies $\|U_n\|_{\mathcal{H}}=1$ and 
\begin{equation*}\label{convresol}
i\lambda_nU_n-\mathcal{A} U_n=F_n\to 0.
\end{equation*}
Since $\|\mathcal{A}  U_n\|_{\mathcal{H}}\leq C$, it follows that $U_n$ is bounded in $D(\mathcal{A})$. In particular, we find that 
\begin{eqnarray}\label{bbb}
\alpha u_{n,xx}+\kappa v_{n,xx} \quad \text{is bounded in $H^{2}(0,\ell)$}.
\end{eqnarray}
From \eqref{Eqrtt1}  and by dissipation \eqref{uno}, we obtain
$$
(u_n,v_{n})\rightarrow (0,0)\quad \mbox{strongly in}\quad [L^2(\ell_0,\ell_1)]^2.
$$
Therefore, from \eqref{bbb}, there exists a subsequence of $U_n$ (we still denote in the same way) such that 
$$
U_n\quad \rightarrow\quad U=(u,v)\quad \mbox{strongly in}\quad \mathcal{H}.
$$
So we have that \(\|U\|_{\mathcal{H}} = 1\). Since the operator \(\mathcal{A}\) is closed and the sequence \(\mathcal{A} U_n = i \lambda_n U_n - F_n\) converges strongly in \(\mathcal{H}\) to \(i \lambda U\) (noting that \(F = 0\)), it follows that \(U\) satisfies:
\[
i \lambda U - \mathcal{A} U = 0.
\]
From equation \eqref{uno} with \(F = 0\), we obtain:
\[
v_{xx} = 0 \quad \text{for all } x \in (\ell_0, \ell_1), \quad v(\xi_1) = v(\xi_2) = v_x(\xi_2) = 0,
\]
where \(\xi_1, \xi_2 \in (\ell_0, \ell_1)\). Similarly, from equation \eqref{Eqrtt1}, we deduce:
\[
u_{xx} = 0 \quad \text{for all } x \in (\ell_0, \ell_1), \quad u(\xi_2) = u_x(\xi_2) = 0,
\]
with \(\xi_2 \in (\ell_0, \ell_1)\). Since \(u_{xx} = 0\) and \(v_{xx} = 0\) in \((\ell_0, \ell_1)\), and given the boundary conditions \(u(\xi_2) = u_x(\xi_2) = 0\) and \(v(\xi_1) = v(\xi_2) = v_x(\xi_2) = 0\), it follows that \(u = 0\) and \(v = 0\) in \((\ell_0, \ell_1)\).

Consequently, in the interval \((0, \ell_0)\), the system reduces to:
\[
\begin{cases}
i \lambda u - v = 0, \\
i \lambda v + \left[ \alpha(x) u_{xx} \right]_{xx} = 0,
\end{cases}
\]
with boundary conditions \(u(\ell_0) = u_x(\ell_0) = u_{xx}(\ell_0) = u_{xxx}(\ell_0) = 0\). This implies that \(u = 0\) in \((0, \ell_0)\). Applying a similar argument in the interval \((\ell_1, \ell)\), we also conclude that \(u = 0\) in \((\ell_1, \ell)\). Thus, \(U = (u, v) = 0\) in \(\mathcal{H}\).

However, this contradicts the assumption that \(\|U\|_{\mathcal{H}} = 1\). Therefore, our conclusion follows.

        \end{proof}
 
        \subsection{Estimates over the viscoelastic component of interval $(\ell_0, \ell_1)$}
        
Let $v_1 \in H^4(\ell_0,\ell_1)\cap H_0^2(\ell_0,\ell_1)$ be the solution of 
\begin{equation}\label{eqv1}
           i\lambda v_1 + (\alpha_0 v_{1,xx})_{xx} =  f_2 \in L^2(\ell_0,\ell_1).
\end{equation}
           To facilitate estimates over $(\ell_0,\ell_1)$, let us introduce $v_2 = v-v_1$, where $v$ is the solution of   \eqref{Eqrtt1}--\eqref{Eqrtt2}. Hence we have that $v=v_1+v_2$. Note that $v_2\in H^2(\ell_0,\ell_1)$ verifies 
        \begin{eqnarray}
        \begin{array}{l} \label{eqv2}
             i \lambda v_2 + (\alpha u_{xx})_{xx} + (\alpha_0 v_{2,xx})_{xx} = 0,\\
             \noalign{\medskip}
\salto{\alpha u_{xxx}}{2}=-\gamma_2v(\xi_2),\quad  \salto{\alpha u_{xx}}{2}=\gamma_3v_{x}(\xi_2). 
        \end{array}
        \end{eqnarray}
    Making inner product from \eqref{eqv1} by $\overline{v_{1,xxxx}}$ and using the boundary and transmission conditions, it follows that
        \begin{eqnarray} 
            \int_{\ell_0}^{\ell_1}  i \lambda  |v_{1,xx}|^2 dx  +\int_{\ell_0}^{\ell_1} \alpha_0 |v_{1,xxxx}|^2 dx = \int_{\ell_0}^{\ell_1}  f_2 \overline{v_{1,xxxx}} dx. \label{eqv1xxxx}
        \end{eqnarray}
        Considering the real part in \eqref{eqv1xxxx}, we have
        \begin{eqnarray*}
           &\displaystyle \int_{\ell_0}^{\ell_1} \alpha_0 |v_{1,xxxx}|^2 dx = \mbox{Re} \left( \displaystyle \int_{\ell_0}^{\ell_1}  f_2 \overline{v_{1,xxxx}} dx \right)) \quad
           \Rightarrow \quad \|v_{1,xxxx}\| \leq C\|F\|_{\mathcal{H}}.
        \end{eqnarray*}
        Taking  the imaginary part in \eqref{eqv1xxxx} and using the above inequality, we get 
        \begin{eqnarray} \label{v1xx}
            \sqrt{ |\lambda|} \|v_{1,xx}\| \leq C \|F\|_{\mathcal{H}}.
        \end{eqnarray}
        Similarly, making inner product between   \eqref{eqv1} and $\overline{i\lambda v_{1}}$, we find  
        \begin{eqnarray*}
           &\displaystyle \int_{\ell_0}^{\ell_1}  \lambda^2 |v_{1}|^2 dx  \leq C \|F\|_{\mathcal{H}}^2.
        \end{eqnarray*}
        Therefore, we have 
        \begin{eqnarray}\label{v1}
            |\lambda |^2\|v_{1}\|^2 +  |\lambda |\|v_{1,xx}\|^2 + \|v_{1,xxxx}\|^2 \leq C \|F\|_{\mathcal{H}}^2.
        \end{eqnarray}

 \noindent
Where $\|\cdot\|=\|\cdot\|_{L^2}$. Let us denote by $\mathfrak{P}$ and $\mathfrak{R}$ the functions defined as
$$
\mathfrak{P}^2:=\|U\|_{\mathcal{H}}\|F\|_{\mathcal{H}}+\frac{1}{|\lambda|}\|F\|_{\mathcal{H}}^2,\qquad \mathfrak{R}^2:=\|U\|_{\mathcal{H}}\|F\|_{\mathcal{H}}+\|F\|_{\mathcal{H}}^2.
$$
We recall also that, in the following three Lemmas, $u$ and $v$ are solutions of  system \eqref{Eqrtt1}.
\begin{lemma} \label{pro12} 
           Over the viscoelastic component $ (\ell_0, \ell_1) $, { the solution of \eqref{reso}  verifies }
            \begin{equation}
            \label{eqlemma2}
                 \int_{\ell_0}^{\ell_1} \left(\alpha |u_{xx}|^2 +  |v|^2 \right)dx \leq \dfrac{C}{|\lambda| } \mathfrak{P},
            \end{equation}
            for $\lambda$ large.
            \end{lemma}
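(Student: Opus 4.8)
The plan is to control the local energy on $(\ell_0,\ell_1)$ by feeding the information already at our disposal --- the dissipation identity \eqref{uno}, the sharp bounds \eqref{v1} on the auxiliary solution $v_1$, and the first equation of \eqref{reso} --- into a multiplier (energy) identity for the second equation of \eqref{reso} restricted to the viscoelastic interval.

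First I would read off from \eqref{uno} that $\|v_{xx}\|_{L^2(\ell_0,\ell_1)}^2\le C\|F\|_{\mathcal H}\|U\|_{\mathcal H}\le C\,\mathfrak P^2$ and that $|v(\xi_1)|,\ |v(\xi_2)|,\ |v_x(\xi_2)|\le C\,\mathfrak P$. Writing $v=v_1+v_2$ on $(\ell_0,\ell_1)$ and using \eqref{v1}, the same bounds pass, up to constants and with an improved power of $|\lambda|$ on the $v_1$ part, to $v_2$; interpolating the interior point values at $\xi_1$ and $\xi_2$ against $\|v_{2,xx}\|$ through Lemmas~\ref{desDI}--\ref{desGN} then controls $\|v_2\|_{L^2(\ell_0,\ell_1)}$ and $\|v_{2,x}\|_{L^2(\ell_0,\ell_1)}$, hence $\|v\|_{L^2(\ell_0,\ell_1)}$, in terms of $\mathfrak P$. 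The first equation of \eqref{reso} next gives $u_{xx}=(v_{xx}+f_{1,xx})/(i\lambda)$ on $(\ell_0,\ell_1)$, so that $\|u_{xx}\|_{L^2(\ell_0,\ell_1)}$, and likewise $|u(\xi_2)|$ and $|u_x(\xi_2)|$, gain an extra factor $|\lambda|^{-1}$ relative to $\mathfrak P$.

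To produce the factor $|\lambda|^{-1}$ appearing in \eqref{eqlemma2}, I would test the second equation of \eqref{reso} against $\bar u$ on each of $(\ell_0,\xi_2)$ and $(\xi_2,\ell_1)$, integrate by parts twice and substitute $i\lambda u=v+f_1$. Using \eqref{DwxxO2} and the pointwise-source transmission conditions \eqref{DwwO3}, the interior interface $\xi_2$ contributes only the terms $\gamma_2 v(\xi_2)\overline{u(\xi_2)}$ and $\gamma_3 v_x(\xi_2)\overline{u_x(\xi_2)}$, which are already under control; the bulk collapses to the difference $\alpha_2\|u_{xx}\|_{L^2(\ell_0,\ell_1)}^2-\|v\|_{L^2(\ell_0,\ell_1)}^2$ together with the remainders $\int f_2\bar u$, $\int v\bar f_1$ and $\alpha_0\int v_{xx}\bar u_{xx}$, which are of lower order thanks to the smallness of $\|u\|_{L^2(\ell_0,\ell_1)}\le C|\lambda|^{-1}(\|v\|+\|f_1\|)$ and the bounds of the previous paragraph. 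Combining the resulting control of $\alpha_2\|u_{xx}\|^2-\|v\|^2$ with the direct estimate for $\|u_{xx}\|^2$ --- i.e. closing the loop on $\|v\|^2$ --- would then yield \eqref{eqlemma2} for $|\lambda|$ large.

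The delicate point, the one emphasized in the introduction, is the boundary contribution at the material interfaces $x=\ell_0$ and $x=\ell_1$: the traces of $M$, $M_x$, $u$ and $u_x$ there are not controlled by the viscoelastic dissipation, so a naive integration by parts against $\bar u$ does not close on its own. I would overcome this in one of two ways: either replace the multiplier $\bar u$ by $\bar v_1$, which lies in $H_0^2(\ell_0,\ell_1)$ and therefore annihilates every boundary term at $\ell_0$ and $\ell_1$ while still coupling to $u_{xx}$ through $\alpha_2\int u_{xx}\overline{v_{1,xx}}$; or invoke the relation between the second- and third-order traces at the elastic interfaces furnished by Lemma~\ref{Funda}, combined with the interpolation inequalities of Lemmas~\ref{desDI}--\ref{desGN}, to bound those interface traces by $C\,\mathfrak P$. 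This is where essentially all the work lies; once the interface traces are tamed, \eqref{eqlemma2} follows by collecting terms and absorbing the lower-order ones for $|\lambda|$ large.
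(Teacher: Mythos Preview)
Your plan departs from the paper's, and the departure leaves a genuine gap. The paper never runs a multiplier identity on $(\ell_0,\ell_1)$. Instead it exploits the splitting $v=v_1+v_2$ through a negative--order interpolation: the whole point of choosing $v_1\in H^4\cap H^2_0(\ell_0,\ell_1)$ solving \eqref{eqv1} is that (a) it absorbs the forcing $f_2$, so $v_2$ satisfies the \emph{homogeneous} equation \eqref{eqv2}, and (b) duality with $H^2_0$ kills every boundary contribution. Pairing \eqref{eqv2} with test functions in $H^2_0(\ell_0,\ell_1)$ gives $\|v_2\|_{H^{-2}}\le C|\lambda|^{-1}\bigl(\|u_{xx}\|+\|v_{2,xx}\|\bigr)\le C|\lambda|^{-1}\mathfrak P$; combined with $\|v_2\|_{H^2}\le C\mathfrak P$ from \eqref{uno} and \eqref{v1xx}, the interpolation inequality $\|v_2\|_{L^2}^2\le C\|v_2\|_{H^{-2}}\|v_2\|_{H^2}$ yields $\|v_2\|_{L^2}^2\le C|\lambda|^{-1}\mathfrak P^2$, and \eqref{v1} together with the direct bound on $\|u_{xx}\|$ finish the proof in a few lines.

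Your multiplier route does not close. Testing the second equation of \eqref{reso} against $\bar u$ on $(\ell_0,\ell_1)$ produces, besides $\alpha_2\|u_{xx}\|^2-\|v\|^2$, the interface traces of $M$, $M_x$, $u$, $u_x$ at $\ell_0,\ell_1$, and the remainder $\int v\bar f_1$. The interface traces are exactly what the paper flags as uncontrollable from the viscous dissipation alone; Lemma~\ref{Funda} concerns $x=0$ and $x=\ell$, not $\ell_0,\ell_1$, and the trace estimates at $\ell_0,\ell_1$ that the paper does prove (Lemma~\ref{viscco}) rely on \eqref{v2}, which is established \emph{inside} the proof of the present lemma --- so invoking them here is circular. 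Switching the multiplier to $\bar v_1\in H^2_0$ does annihilate those boundary terms, but then the principal bulk term becomes $i\lambda\int v\,\bar v_1$ rather than $\|v\|^2$, so you no longer isolate the quantity you need. And even if the traces were handled, with only $\|v\|_{L^2(\ell_0,\ell_1)}\le C\mathfrak P$ at hand the remainder $\int v\bar f_1$ is $O(\mathfrak P\,\|F\|_{\mathcal H})$, which after Young's inequality leaves $C\|F\|_{\mathcal H}^2$ with no gain in $|\lambda|$; the paper avoids this precisely because the $v_2$--equation has zero right-hand side. The missing idea is the $H^{-2}/H^2$ interpolation for $v_2$.
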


            \begin{proof}
          Using \eqref{Eqrtt1} and \eqref{uno} and recalling the definition of $\mathfrak{R}$ we get
              \begin{equation}\label{duno}
\int_{\ell_0}^{\ell_1}\alpha_0|u_{xx}(x)|^{2} dx+\gamma_2|u(\xi_2)|^2 +\gamma_3|u_{x}(\xi_2)|^2 \leq  \frac{C}{|\lambda|^2} \mathfrak{R}.
    \end{equation}
    From \eqref{uno} and \eqref{v1xx}, and recalling the definition of $\mathfrak{P}$ we get
    \begin{eqnarray}\label{v2xx}
       \|v_2\|_{H^{2}}= \|v_{xx}-v_{1,xx}\| \leq C\, \mathfrak{P}.
    \end{eqnarray}
    Using $\eqref{eqv1}$ and  \eqref{v2xx} 
    we arrive at
    \begin{eqnarray*}
         \|v_2\|_{H^{-2}} & \leq & \dfrac{C}{|\lambda|}\mathfrak{P}.
    \end{eqnarray*}
        On the other hand, by interpolation inequality and Young inequality, we have
        \begin{eqnarray} \label{v2} 
            \|v_2\|^2
            \leq 
            \|v_2\|_{H^2} \|v_2\|_{H^{-2}} 
            \leq  \dfrac{C}{|\lambda| } \mathfrak{P}.
        \end{eqnarray}
        Therefore, 
        \begin{eqnarray*} \label{eqv}
            \int^{\ell_1}_{\ell_0}\left( |v|^2 + \alpha |u_{xx}|^2\right) dx \leq   \dfrac{C}{|\lambda| } \mathfrak{P}\quad \Rightarrow          \int^{\ell_1}_{\ell_0} \left(|v|^2 + \alpha |u_{xx}|^2 \right) dx \leq \dfrac{C}{|\lambda|^{2}}  \|F\|^2_{\mathcal{H}} +  \epsilon \|U\|^2_{\mathcal{H}}.
        \end{eqnarray*}
\end{proof}

\begin{lemma}\label{viscco}
Let us denote by $b$ either $\ell_0$ or $\ell_1$, then the solution of  system \eqref{reso} verifies the following inequalities over  the visco elastic interval $(\ell_0,\ell_1)$. 
\begin{equation}\label{viscbb}
\left|\frac{\alpha u_{xxx}(b)+\alpha_0v_{xxx}(b)}{i\lambda}\right|\leq \frac{c}{|\lambda|^{5/8}}\mathfrak{P},\qquad 
\left|\frac{\alpha u_{xx}(b)+\alpha_0v_{xx}(b)}{i\lambda}\right|\leq \frac{c}{|\lambda|^{7/8}}\mathfrak{P} .
\end{equation}
Moreover, we have 
\begin{eqnarray}\label{Lorder}
    |v(\ell_0^+)|\leq \frac{C}{|\lambda|^{\frac{3}{8}}}\mathfrak{P},\qquad     |v_x(\ell_0^+)|\leq \frac{C}{|\lambda|^{\frac{1}{8}}}\mathfrak{P}.
 \end{eqnarray}
\end{lemma}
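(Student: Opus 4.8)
The idea is to bound the quantities appearing in \eqref{viscbb} and \eqref{Lorder} by combining the decomposition $v=v_1+v_2$ with the estimates already established: the strong bounds \eqref{v1} on $v_1$, the bound \eqref{v2xx} on $v_2$ in $H^2(\ell_0,\ell_1)$, and the bound on $\|v_2\|_{H^{-2}}$ obtained inside the proof of Lemma~\ref{pro12}. Since all of these sit inside the viscoelastic interval, where $\kappa=\alpha_0$, the combination $\alpha u_{xx}+\alpha_0 v_{xx}=M$ (the bending moment) is exactly the object one wants to control, and the resolvent equation \eqref{Eqrtt2} reads $i\lambda v+M_{xx}=f_2$ on $(\ell_0,\ell_1)$.

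\textbf{Step 1: rewrite $M$ and its derivatives via $v_1,v_2$.} On $(\ell_0,\ell_1)$ write $M=\alpha u_{xx}+\alpha_0 v_{xx}$. From \eqref{eqv1}, $(\alpha_0 v_{1,xx})_{xx}=f_2-i\lambda v_1$, and from \eqref{eqv2}, $M_{xx}+(\alpha_0 v_{2,xx})_{xx}=-i\lambda v_2$; hence $M_{xx}=-i\lambda v_2-(\alpha_0 v_{2,xx})_{xx}$. So $M/(i\lambda)$, together with its first three $x$-derivatives, is controlled by derivatives of $v_2$ of order up to five — but we only have $v_2$ in $H^2$ (size $\mathfrak{P}$) and $v_2$ in $H^{-2}$ (size $\mathfrak{P}/|\lambda|$). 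The point is therefore to interpolate. Using Lemma~\ref{desDI} (Gagliardo–Nirenberg) on the interval $(\ell_0,\ell_1)$ with $w=v_2$, $m$ running between $-2$ and $2$, one gets pointwise control of $v_2$ and $v_{2,x}$ at $b\in\{\ell_0,\ell_1\}$: interpolating $\|v_2\|_{H^2}\le C\mathfrak{P}$ against $\|v_2\|_{H^{-2}}\le C\mathfrak{P}/|\lambda|$ yields $\|v_2\|_{H^{s}}\le C|\lambda|^{(s-2)/4}\mathfrak{P}$ for $-2\le s\le 2$, and then Lemma~\ref{desGN}-type trace estimates give $|v_2(b)|\lesssim |\lambda|^{-3/8}\mathfrak{P}$ and $|v_{2,x}(b)|\lesssim |\lambda|^{-1/8}\mathfrak{P}$ (the exponents $3/8,1/8$ arise from $s=1/2$ and $s=3/2$ respectively). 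Since $v_1\in H^2_0(\ell_0,\ell_1)$ vanishes at $b$ with $v_{1,x}(b)=0$ as well, these give exactly the two inequalities in \eqref{Lorder} for $v(\ell_0^+)=v_2(\ell_0^+)$ and $v_x(\ell_0^+)=v_{2,x}(\ell_0^+)$.

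\textbf{Step 2: the moment estimates \eqref{viscbb}.} Here one needs the $x$-derivatives of $M$ at $b$. From $i\lambda v+M_{xx}=f_2$ we get $M_{xx}=f_2-i\lambda v=f_2-i\lambda(v_1+v_2)$, so $M_{xx}/(i\lambda)=f_2/(i\lambda)-v_1-v_2$, which is bounded in $L^2(\ell_0,\ell_1)$ by $C\mathfrak{P}$ using \eqref{v1} and \eqref{v2} (recall $\|v_2\|\le C\mathfrak{P}/|\lambda|^{1/2}$, even better). Next, integrating and using the transmission/boundary data to control the lower-order terms, $M_x/(i\lambda)$ and $M/(i\lambda)$ are then estimated in $L^2$; finally, to pass from $L^2$ bounds on $M/(i\lambda)$ and $M_{xx}/(i\lambda)$ to the pointwise values $M_{xxx}(b)/(i\lambda)$ and $M_{xx}(b)/(i\lambda)$ at the endpoint, one interpolates again with Lemma~\ref{desDI}–Lemma~\ref{desGN}, this time on $M/(i\lambda)\in H^2(\ell_0,\ell_1)$: the trace of the third derivative scales like $\|M/(i\lambda)\|^{1/4}\|(M/(i\lambda))''\|^{3/4}$-type quantities but with an extra half-derivative gained from $M_{xx}/(i\lambda)$ being itself already controlled, producing the exponents $5/8$ and $7/8$. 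One has to be slightly careful: $M$ need not vanish at $b$, so the full trace inequality (not the $H^2_0$ version) must be used, which is why the $L^2$ bound on $M/(i\lambda)$ is genuinely needed as input, not just the bound on its second derivative.

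\textbf{Main obstacle.} The delicate point is bookkeeping of the interpolation exponents and making sure that at every endpoint $b$ the lower-order boundary terms (values of $M,M_x$ at $b$) are legitimately controlled — they cannot be discarded the way they would be for an $H^2_0$ function, so one must track them through the integration-by-parts chain $M_{xx}\rightsquigarrow M_x\rightsquigarrow M$, using \eqref{duno} and \eqref{v2xx} together with the transmission conditions \eqref{eqv2}$_2$ at $\xi_2$. Getting the sharp powers $5/8$, $7/8$, $3/8$, $1/8$ (rather than something weaker) requires using the \emph{two-sided} interpolation between $H^{-2}$ and $H^2$ for $v_2$, i.e. genuinely exploiting that $\|v_2\|_{H^{-2}}$ is one full power of $|\lambda|$ smaller than $\|v_2\|_{H^2}$; a one-sided estimate would lose a factor and break the eventual Gevrey-$4$ conclusion. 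Everything else is a routine application of the Gagliardo–Nirenberg and trace inequalities quoted as Lemmas~\ref{desDI} and \ref{desGN}.
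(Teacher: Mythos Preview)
Your treatment of \eqref{Lorder} is essentially correct and matches the paper: since $v_1\in H_0^2(\ell_0,\ell_1)$ one has $v(b)=v_2(b)$ and $v_x(b)=v_{2,x}(b)$, and the Gagliardo--Nirenberg trace inequality (Lemma~\ref{desGN}) with the inputs $\|v_2\|\le C|\lambda|^{-1/2}\mathfrak{P}$ and $\|v_{2,xx}\|\le C\mathfrak{P}$ produces exactly the exponents $3/8$ and $1/8$. (The paper does the same, but applies Lemma~\ref{desGN} to $v$ rather than $v_2$.)

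For \eqref{viscbb}, however, your Step~2 contains a genuine confusion. The quantities to bound are $M(b)/(i\lambda)$ and $M_x(b)/(i\lambda)$, where $M=\alpha u_{xx}+\alpha_0 v_{xx}$, \emph{not} $M_{xx}(b)/(i\lambda)$ and $M_{xxx}(b)/(i\lambda)$. Consequently the sentence ``the trace of the third derivative scales like \ldots'' is aiming at the wrong target: traces of the second and third derivatives of an $H^2$ function are not even defined, and no ``extra half-derivative gain'' is needed. Relatedly, the identity you wrote, $M_{xx}=-i\lambda v_2-(\alpha_0 v_{2,xx})_{xx}$, is the formula for $(\alpha u_{xx})_{xx}$, not for $M_{xx}$; the correct relation is simply $M_{xx}=f_2-i\lambda v$ from \eqref{reso}, so that $M_{xx}/(i\lambda)=f_2/(i\lambda)-v$.

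The fix is immediate and is precisely what the paper does. Set $Y=(\alpha u_{xx}+\alpha_0 v_{2,xx})/(i\lambda)$; then from \eqref{eqv2} one has the clean identity $Y_{xx}=-v_2$. One obtains $\|Y\|\le C|\lambda|^{-1}\mathfrak{P}$ \emph{directly} from \eqref{duno} and \eqref{v2xx} (no integration, no boundary bookkeeping), and $\|Y_{xx}\|=\|v_2\|\le C|\lambda|^{-1/2}\mathfrak{P}$ from \eqref{v2}. Lemma~\ref{desGN} applied to $Y$ then gives $|Y(b)|\le C|\lambda|^{-7/8}\mathfrak{P}$ and $|Y_x(b)|\le C|\lambda|^{-5/8}\mathfrak{P}$ in one stroke. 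Since $M/(i\lambda)=Y+\alpha_0 v_{1,xx}/(i\lambda)$, it remains only to add back the trace of $v_{1,xx}$ and $v_{1,xxx}$ at $b$, which is harmless by Lemma~\ref{desGN} applied to $v_{1,xx}\in H^2$ together with \eqref{v1}. Your ``integrating and using the transmission/boundary data'' step is therefore unnecessary and, as written, circular (it would require knowing $M_x(b)/(i\lambda)$ as input).
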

\begin{proof}Denoting by $Y$ the function
$$
Y=\frac{1}{i\lambda}\left(\alpha  u_{xx}+\alpha_0 v_{2,xx}\right):= \frac{1}{i\lambda}w_{xx}
$$
where $w=\alpha  u+\alpha_0 v_{2}$, note that 
$$
\|Y\|_{L^2}\leq \frac{c}{|\lambda|}\left(\|u_{xx}\|_{L^2}+\|v_{2,xx}\|_{L^2}\right)\leq \frac{c}{|\lambda|}\mathfrak{P}.
$$
 Since
\begin{eqnarray}\label{Yxx}
Y_{xx}= - v_2\in H^2,\quad\Rightarrow\quad  Y_{xxxx}= - v_{2,xx}\in L^2.
\end{eqnarray}
It follows from Lemma \ref{desGN} that
\begin{eqnarray*}
|Y(b)|
&\leq& C\|Y\|+ C\|Y\|^{3/4}\|Y_{xx}\|^{1/4}\\
&\leq& C\|Y\|+C\|Y\|^{3/4}\|v_2\|^{1/4}\\
&\leq& \frac{c}{|\lambda|}\mathfrak{P}+C(\frac{c}{|\lambda|}\mathfrak{P})^{3/4}(\frac{c}{|\lambda|^{1/2}}\mathfrak{P})^{1/4}\\
&\leq& \frac{c}{|\lambda|^{7/8}}\mathfrak{P}.
\end{eqnarray*}
 Using similar approach 
we have 
\begin{eqnarray}
|Y_x(b)| 
&\leq&  C\|Y\|+C\|Y\|^{1/4}\|Y_{xx}\|^{3/4}\nonumber\\
&\leq& \frac{c}{|\lambda|}\mathfrak{P}+C(\frac{c}{|\lambda|}\mathfrak{P})^{1/4}(\frac{c}{|\lambda|^{1/2}}\mathfrak{P})^{3/4}\nonumber\\
&\leq& \frac{c}{|\lambda|^{5/8}}\mathfrak{P}.\label{tresbb}
\end{eqnarray}
Finally, since 
\begin{eqnarray}\label{tresbb2}
\frac{\alpha u_{xxx}(b)+\alpha_0v_{xxx}(b)}{i\lambda}=Y_x(b)+\frac{\alpha_0v_{1,xxx}(b)}{i\lambda},
\end{eqnarray}
using Lemma \ref{desGN} and inequalities \eqref{v1}, we arrive to 
\begin{eqnarray*}
|v_{1,xxx}(b)|&\leq& c\|v_{1,xx}\|+c\|v_{1,xx}\|^{1/4}\|v_{1,xxxx}\|^{3/4}\\
&\leq& \frac{c}{|\lambda|^{1/2}}\|F\|+\frac{c}{|\lambda|^{1/8}}\|F\|.
\end{eqnarray*}
Hence for $\lambda$ large we get 
\begin{eqnarray*}
\left|\frac{\alpha_0v_{1,xxx}(b)}{i\lambda}\right|&\leq& \frac{c}{|\lambda|}\|v_{1,xx}\|+ \frac{c}{|\lambda|}\|v_{1,xx}\|^{1/4}\|v_{1,xxxx}\|^{3/4}\\
&\leq& \frac{c}{|\lambda|^{9/8}}\|F\|.
\end{eqnarray*}
Inserting the above inequality and inequality \eqref{tresbb} into \eqref{tresbb2} we arrive to inequality \eqref{viscbb}. Using the same above reasoning we get the first inequality in \eqref{viscbb}. 
Finally, using Lemma \ref{desGN} 
\begin{eqnarray*}
    |v_x(\ell_0^+)|\leq  \|v_x\|_{L^2}^{\frac{1}{2}}\|v_{xx}\|_{L^2}^{\frac{1}{2}}\leq  \|v\|_{L^2}^{\frac{1}{4}}\|v_{xx}\|_{L^2}^{\frac{3}{4}}\leq \frac{C}{|\lambda|^{\frac{1}{8}}}\mathfrak{P}.
 \end{eqnarray*}
Similarly we get
    \small \begin{eqnarray*}
        |v(\ell_0^{+})|
        & \leq & C \|v\| + C \|v\|^{\frac{3}{4}} \|v_{xx}\|^{\frac{1}{4}} \leq\dfrac{C}{|\lambda|^{\frac{3}{8}}}\mathfrak{P},
    \end{eqnarray*} \normalsize
from where our conclusion follows.
\end{proof}

 \subsection{Estimates over the elastic component, interval $(0,  \ell_0)$}
 
 The next Lemma is important to apply the observability inequality to get the estimate of the elastic component. Here we follow the same ideas of \cite {LiuLiu}.

\begin{lemma}\label{Funda}
Over the interval $(0,\xi_1)$ the solution of  system \eqref{reso} satisfies 
$$
|u_{xx}(0)|\leq C\frac{|u_{xxx}(0)|}{|\lambda|^{1/2}}+\frac{C}{|\lambda|^{3/4}}\|F\|_{\mathcal{H}}+\frac{Ce^{-|\lambda|^{1/2}\ell_0}}{|\lambda|^{1/2}}\|U\|_{\mathcal{H}}^{1/2}\|F\|_{\mathcal{H}}^{1/2}.
$$
Similarly, over the interval $(\ell_1,\ell)$, we have
$$
|u_{xx}(\ell)|\leq C\frac{|u_{xxx}(\ell)|}{|\lambda|^{1/2}}+\frac{C}{|\lambda|^{3/4}}\|F\|_{\mathcal{H}}+\frac{Ce^{-|\lambda|^{1/2}\ell_0}}{|\lambda|^{1/2}}\|U\|_{\mathcal{H}}^{1/2}\|F\|_{\mathcal{H}}^{1/2}.
$$

\end{lemma}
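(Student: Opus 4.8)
The plan is to exploit that $\kappa$ vanishes on the whole elastic block $(0,\ell_0)$. Eliminating $v=i\lambda u-f_1$ from \eqref{reso}, the component $u$ solves on $(0,\ell_0)$ the fourth order ODE
\begin{equation*}
\alpha_1 u_{xxxx}-\lambda^2 u=g,\qquad g:=f_2+i\lambda f_1,\qquad u(0)=u_x(0)=0,
\end{equation*}
where $u,u_x,u_{xx}$ are continuous at $\xi_1$ while $u_{xxx}$ jumps there by $\alpha_1\bigl(u_{xxx}(\xi_1^+)-u_{xxx}(\xi_1^-)\bigr)=-\gamma_1 v(\xi_1)$ (see \eqref{DwwO3}); this jump is equivalent to adding a Dirac forcing $\gamma_1 v(\xi_1)\delta_{\xi_1}$ on $(0,\ell_0)$. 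Setting $a:=(\lambda^2/\alpha_1)^{1/4}$ (comparable to $|\lambda|^{1/2}$) and taking the fundamental solution $E(x)=\tfrac{1}{2\alpha_1 a^{3}}(\sinh ax-\sin ax)$ of $\alpha_1\partial_x^{4}-\lambda^2$, normalized by $E(0)=E'(0)=E''(0)=0$ and $E'''(0)=1/\alpha_1$, the boundary conditions $u(0)=u_x(0)=0$ force
\begin{equation*}
u(x)=\frac{u_{xx}(0)}{2a^{2}}\bigl(\cosh ax-\cos ax\bigr)+\frac{u_{xxx}(0)}{2a^{3}}\bigl(\sinh ax-\sin ax\bigr)+\int_{0}^{x}E(x-t)\,g(t)\,dt+\gamma_1 v(\xi_1)E(x-\xi_1)\mathbf{1}_{\{x>\xi_1\}}.
\end{equation*}

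Next I would isolate the growing mode $e^{ax}$. Writing each of $\cosh ax-\cos ax$, $\sinh ax-\sin ax$ and $E(x)$ as its $e^{ax}$-part ($\tfrac12 e^{ax}$, $\tfrac12 e^{ax}$, $\tfrac{1}{4\alpha_1 a^{3}}e^{ax}$) plus a uniformly bounded remainder, the whole $e^{ax}$-content of $u$ on $(0,\ell_0)$ is carried by the single coefficient
\begin{equation*}
\mathcal{K}:=\frac{u_{xx}(0)}{4a^{2}}+\frac{u_{xxx}(0)}{4a^{3}}+\frac{1}{4\alpha_1 a^{3}}\int_{0}^{\ell_0}e^{-at}g(t)\,dt+\frac{\gamma_1 v(\xi_1)}{4\alpha_1 a^{3}}e^{-a\xi_1}.
\end{equation*}
Differentiating the representation twice (no boundary terms appear because $E(0)=E'(0)=0$) gives $u_{xx}(\ell_0^-)=a^{2}\mathcal{K}\,e^{a\ell_0}+\rho$, with $|\rho|\le C|u_{xx}(0)|+\tfrac{C}{a}|u_{xxx}(0)|+\tfrac{C}{a}\bigl(\|g\|_{L^{1}(0,\ell_0)}+|v(\xi_1)|\bigr)$, hence $|a^{2}\mathcal K|\le e^{-a\ell_0}\bigl(|u_{xx}(\ell_0^-)|+|\rho|\bigr)$. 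On the other hand, solving the definition of $\mathcal K$ for $u_{xx}(0)$ yields
\begin{equation*}
u_{xx}(0)=4a^{2}\mathcal{K}-\frac{u_{xxx}(0)}{a}-\frac{1}{\alpha_1 a}\int_{0}^{\ell_0}e^{-at}g(t)\,dt-\frac{\gamma_1 v(\xi_1)}{\alpha_1 a}e^{-a\xi_1}.
\end{equation*}

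The remaining task is to estimate the right-hand side. In the source integral, the $f_2$-part is bounded by $\|f_2\|_{L^{2}}\|e^{-a\cdot}\|_{L^{2}(0,\ell_0)}\le Ca^{-1/2}\|F\|_{\mathcal{H}}$, while the dangerous $i\lambda f_1$-part is integrated by parts twice, using $f_1(0)=f_1'(0)=0$; this leaves $\tfrac{i\lambda}{a^{2}}\int_0^{\ell_0}e^{-at}f_{1,xx}\,dt$ (with $|\lambda|a^{-2}$ bounded) plus boundary terms carrying $e^{-a\ell_0}$, so that $\tfrac1a\bigl|\int_0^{\ell_0}e^{-at}g\,dt\bigr|\le C|\lambda|^{-3/4}\|F\|_{\mathcal{H}}$ up to an exponentially small error. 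For the trace, the transmission relations \eqref{DwxxO2} identify $\alpha_1 u_{xx}(\ell_0^-)$ with $M(\ell_0^+)=\alpha_2 u_{xx}(\ell_0^+)+\alpha_0 v_{xx}(\ell_0^+)$, which by Lemma~\ref{viscco} (with $b=\ell_0$) is $\le C|\lambda|^{1/8}\mathfrak{P}\le C|\lambda|^{1/8}\bigl(\|U\|_{\mathcal{H}}^{1/2}\|F\|_{\mathcal{H}}^{1/2}+|\lambda|^{-1/2}\|F\|_{\mathcal{H}}\bigr)$; moreover $|v(\xi_1)|\le C\|U\|_{\mathcal{H}}^{1/2}\|F\|_{\mathcal{H}}^{1/2}$ by \eqref{uno} and $\|g\|_{L^{1}(0,\ell_0)}\le C|\lambda|\|F\|_{\mathcal{H}}$. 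Since $e^{-a\ell_0}$ and $e^{-a\xi_1}$ decay faster than every power of $|\lambda|$, for $|\lambda|$ large all products $|\lambda|^{k}e^{-a\ell_0}$ and $|\lambda|^{k}e^{-a\xi_1}$ are $\le |\lambda|^{-1/2}e^{-c|\lambda|^{1/2}\ell_0}$ for a suitable $c>0$; multiplying the polynomially bounded quantities above by these exponentials therefore produces only a term $C|\lambda|^{-1/2}e^{-c|\lambda|^{1/2}\ell_0}\|U\|_{\mathcal{H}}^{1/2}\|F\|_{\mathcal{H}}^{1/2}$ and further contributions of order $|\lambda|^{-3/4}\|F\|_{\mathcal{H}}$, and the self-referential term $Ce^{-a\ell_0}|u_{xx}(0)|$ is absorbed on the left. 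Collecting the survivors $|u_{xxx}(0)|/a$, the $|\lambda|^{-3/4}\|F\|_{\mathcal{H}}$ source bound, and the exponentially small boundary term gives the stated inequality on $(0,\xi_1)$; the one on $(\ell_1,\ell)$ follows by the reflection $x\mapsto\ell-x$, $\ell_0\mapsto\ell-\ell_1$.

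The main obstacle is twofold. First, reaching the sharp source rate $|\lambda|^{-3/4}$ rather than the crude $\|f_2+i\lambda f_1\|_{L^{2}}\le C|\lambda|\|F\|_{\mathcal{H}}$ forces the two integrations by parts and genuine use of the $H^{2}_{0}$-regularity of $f_1$. Second, one must make rigorous the slogan that a polynomially bounded trace at $\ell_0$ times $e^{-a\ell_0}$ is negligible: every polynomial prefactor $a^{k}$ has to be checked to be absorbed by a slightly weakened exponential $e^{-c|\lambda|^{1/2}\ell_0}$ once $|\lambda|$ is large, so that the a priori bounds on $u$ and its derivatives at $\ell_0$ furnished by Lemmas~\ref{pro12} and \ref{viscco} enter the final inequality only through that harmless exponentially small term.
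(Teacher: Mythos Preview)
Your argument is correct and reaches the same inequality, but the route is genuinely different from the paper's. The paper works only on $(0,\xi_1)$ and \emph{factors} the operator as $(D^2+\sigma^2)(D+\sigma)(D-\sigma)u=G$; setting $\varphi=(D^2-\sigma^2)u$ and $z=(D+\sigma)u$, the clamped conditions give $z(0)=0$, and propagating $z$ from $\xi_1$ back to $0$ via $z_x-\sigma z=\varphi$ produces exactly the combination $\tfrac{u_{xx}(0)}{2i\sigma}-\tfrac{u_{xxx}(0)}{2\sigma^2}$ plus the same $\sigma^{-3/2}\|F\|$ source term and an exponentially damped boundary remainder $e^{-\sigma\xi_1}z(\xi_1)$. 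In other words, the paper isolates the growing-mode coefficient implicitly through the first-order factor $D-\sigma$, whereas you isolate it explicitly as the $e^{ax}$-coefficient $\mathcal K$ in a Green-function representation on the larger interval $(0,\ell_0)$, treating the jump at $\xi_1$ as a Dirac source.

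What each approach buys: the paper's factorisation is self-contained on the elastic block---the only boundary input is $z(\xi_1)=u_x(\xi_1)-\sigma u(\xi_1)$, controlled by elementary interpolation in $\|U\|_{\mathcal H}$, so Lemma~\ref{Funda} does not depend on the viscoelastic estimates. Your version imports Lemma~\ref{viscco} to bound $u_{xx}(\ell_0^-)=\alpha_1^{-1}M(\ell_0^+)$; this is legitimate (Lemma~\ref{viscco} precedes Lemma~\ref{Funda} and is independent of it), and it makes the ``growing mode must be small'' mechanism very transparent, at the cost of a logical dependence the paper avoids. Both proofs perform the same double integration by parts on $i\lambda f_1$ (using $f_1(0)=f_1'(0)=0$) to upgrade the crude $|\lambda|\|F\|$ source bound to $|\lambda|^{-3/4}\|F\|$, and both absorb all polynomially-growing boundary contributions into the exponentially small term once $|\lambda|$ is large.
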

\begin{proof}
  The model over $(0,\xi_1)$ is written as 
		\begin{align}
		\label{Eqelas1}	i\lambda u-v&=f_{1}\in H^2(0,\xi_1),\\
		\label{Eqelas2}	i\lambda   v +\alpha u_{xxxx}&=  f_{2}\in L^{2}(0,\xi_1).
		\end{align}
Substitution of $v$ given in equation (\ref{Eqelas1}) into (\ref{Eqelas2}) yields
$$
u_{xxxx}-\sigma^4u=G, \quad\mbox{with}\quad G=\frac{1}{\alpha}\left(i\lambda f_1+  f_2\right), \quad \sigma =\left(\frac{ \lambda^2}{\alpha}\right)^{1/4}.
$$
Denoting by $D=\dfrac{d}{dx}$,  the above system can be written as 
\begin{equation*}\label{phidef}
(D^4-\sigma^4)u=G
,\quad \Rightarrow\quad
(D^2+\sigma^2)\underbrace{(D+\sigma)(D-\sigma)u}_{:=\varphi}=G.
\end{equation*}
So we have that 
$
(D^2+\sigma^2)\varphi=G.
$
Solution to this second order linear nonhomogeneous equation is
\begin{eqnarray*}
\varphi(x)&=&C_1e^{i\sigma x}+C_2e^{-i\sigma x}+\int_0^xe^{-i\sigma s}G(s)ds\; \frac{e^{i\sigma x}}{2i\sigma}-\int_0^xe^{i\sigma s}G(s)ds\;\frac{e^{-i\sigma x}}{2i\sigma}\nonumber\\
&:=& J_1+J_2+J_3+J_4\label{1234}
\end{eqnarray*}
where 
$$
C_1=\frac{1}{2i\sigma}\left[i\sigma\varphi(0^{+})+\varphi_x(0^{+})
\right],\quad C_2=\frac{1}{2i\sigma}\left[i\sigma\varphi(0^{+})-\varphi_x(0^{+})
\right].
$$
Since $\varphi=u_{xx}-\sigma^2u$ and  $u(0)=u_x(0)=0$ we get 
$\varphi(0)=u_{xx}(0)$, $\varphi_x(0)=u_{xxx}(0)$. Hence
\begin{equation}\label{ccc}
C_1=\frac{1}{2i\sigma}\left[i\sigma u_{xx}(0)+u_{xxx}(0)
\right],\quad C_2=\frac{1}{2i\sigma}\left[i\sigma  u_{xx}(0)-u_{xxx}(0)
\right].
\end{equation}
Denoting by $z=(D+\sigma)u$ we have that 
$$
z_x-\sigma z=\varphi,\quad \Rightarrow\quad 
z(x)=e^{\sigma (x-\xi_1)}z(\xi_1^{-})+\int_{\xi_1}^xe^{\sigma (x-s)}\varphi(s)\;ds, 
$$
where $z(\xi_1^-)=u_x(\xi_1^-)-\sigma u(\xi_1^-)$. Since $z(0)=0$, then 
\begin{equation}\label{chave}
0=e^{-\sigma \xi_1}z(\xi_1^{-})-\int_0^{\xi_1}e^{-\sigma s}\left[J_1(s)+J_2(s)+J_3(s)+J_4(s)\right]\;ds.
\end{equation}
Note that in point $x=\xi_1$ we have that   $$|u_x({\xi_1})|\leq c\|u\|^{1/4}\|u_{xx}\|^{3/4}\leq \frac{c}{|\lambda|^{1/4}}\left(\|U\|+\|U\|_{\mathcal{H}}^{1/2}\|F\|_{\mathcal{H}}^{1/2}\right),$$
and
	\begin{align*}\label{viscbb2}
|z(\xi_1)|=|u_x(\xi_1^-)-\sigma u(\xi_1^-)|=|u_x(\xi_1^+)-\sigma u(\xi_1^+)|
&\leq \frac{C}{|\lambda|^{7/8}}\|U\|_{\mathcal{H}}^{1/2}\|F\|_{\mathcal{H}}^{1/2}+\frac{C}{|\lambda|^{1/2}}\|F\|_{\mathcal{H}}.
\end{align*}
Thus, 
$$
|e^{-\sigma \xi_1}z(\xi_1^{-})|\leq  \frac{Ce^{-\sigma \xi_1}}{|\lambda|^{1/2}}\left(\|U\|^{1/2}\|F\|^{1/2}+\|F\|\right).
$$
Note that 
\begin{eqnarray*}
\int_0^{\xi_1}e^{-\sigma s}J_1(s)\;ds
=C_1\int_0^{\xi_1}e^{\sigma (-1+i)s}\;ds
=\frac{C_1}{\sigma (-1+i)}\left(e^{-\xi_1\sigma+i\xi_1\sigma}-1\right).
\end{eqnarray*}
Similarly, we have 
\begin{eqnarray*}
\int_0^{\xi_1}e^{-\sigma s}J_2(s)\;ds
&=&-\frac{C_2}{\sigma (1+i)}\left(e^{-\xi_1\sigma-i\xi_1\sigma}-1\right).
\end{eqnarray*}
Therefore, 
\begin{eqnarray*}
\int_0^{\xi_1}e^{-\sigma s}(J_1(s)+J_2(s))\;ds
&=&K_2-\frac{C_1}{\sigma (-1+i)}+\frac{C_2}{\sigma (1+i)},\\
\end{eqnarray*}
where 
$$
K_2=\frac{C_1e^{-\xi_1\sigma+i\xi_1\sigma}}{\sigma (-1+i)}-\frac{C_2e^{-\xi_1\sigma-i\xi_1\sigma}}{\sigma (1+i)},\quad\Rightarrow\quad |K_2|\leq \frac{ce^{-\xi_1\sigma}}{\sigma}\left(\sigma|u_{xx}(0^+)|+|u_{xxx}(0^+)|\right).
$$
The last inequality is due to  (\ref{ccc}). Moreover,
\begin{eqnarray}
\int_0^{\xi_1}e^{-\sigma s}(J_1+J_2)\;ds&=&K_2-\frac{i\sigma u_{xx}(0)+u_{xxx}(0)}{2i\sigma^2 (-1+i)}+\frac{i\sigma u_{xx}(0)-u_{xxx}(0)
}{2i\sigma^2 (1+i)}\nonumber\\
&=&K_2+\frac{u_{xx}(0^{+})}{2i\sigma}-\frac{u_{xxx}(0^{+})
}{2\sigma^2}.\label{pripart}
\end{eqnarray}
On the other hand, 
\begin{eqnarray}
\int_0^{\xi_1}e^{-\sigma s}J_3(s)\;ds&=&\frac{1}{2i\sigma}\int_0^{\xi_1}e^{-\sigma (1-i)s}\int_0^se^{-i\sigma \tau}G(\tau)d\tau\; \;ds\nonumber\\
&=&\frac{1}{-2i\sigma^2 (1-i)}\int_0^{\xi_1}\int_{0}^se^{-i\sigma \tau}G(\tau)d\tau\; \;d\left(e^{-\sigma (1-i)s}\right)\nonumber\\
&=&\frac{-e^{-\xi_1\sigma+i\xi_1\sigma}}{2i\sigma^2 (1-i)}\int_0^{\xi_1}e^{-i\sigma \tau}G(\tau)d\tau +\frac{1}{2i\sigma^2 (1-i)}\int_0^{\xi_1}e^{-\sigma s}G(s)\;ds.
\label{j3}
\end{eqnarray}
Since$f_1(0)=f_1'(0)=0$, performing integration by parts leads to
\begin{eqnarray*}
& &\int_0^{\xi_1}e^{-\sigma s}G(s)\;ds=\frac{1}{\alpha}\int_0^{\xi_1}e^{-\sigma s}\left[i\lambda f_1+f_2\right]\;ds\\
&=&\underbrace{-\frac{i\lambda }{\alpha\sigma^2}e^{-\sigma \xi_1}\left[\sigma f_1(\xi_1)+f_{1,s}(\xi_1)\right]}_{:=X_1}+\underbrace{\frac{i\lambda }{\alpha\sigma^2}\int_0^{\xi_1}e^{-\sigma s}f_{1,ss}(s)\;ds+\frac{1}{\alpha}\int_0^{\xi_1}e^{-\sigma s}f_2(s)\;ds}_{:=X_2}\\
\end{eqnarray*}
Note that 
$$
\left|X_1\right|\leq c\sigma\|F\|e^{-\sigma \xi_1},\quad \left|X_2\right|\leq \frac{C}{\sqrt{\sigma}}[1-e^{-\xi_1\sigma}]^{1/2}\left(\|f_1\|_{H^2}+\|f_2\|\right).
$$
Substitution into (\ref{j3}) we get 
\begin{eqnarray}
\left|\int_0^{\xi_1}e^{-\sigma s}J_3\;ds\right|&\leq &\frac{C}{\sigma^{5/2}}\left(\|f_1\|_{H^2}+\|f_2\|\right),
\label{j4}
\end{eqnarray}
for $\lambda$ large. Similarly, we have 
\begin{eqnarray}
\left|\int_0^{\xi_1}e^{-\sigma s}J_4\;ds\right|&\leq &\frac{C}{\sigma^{5/2}}\left(\|f_1\|_{H^2}+\|f_2\|\right).
\label{j5}
\end{eqnarray}
Substitution of inequalities (\ref{pripart}), (\ref{j4}) and (\ref{j5})  into (\ref{chave}) yields 
$$
|u_{xx}(0)|\leq C\frac{|u_{xxx}(0)|}{\sigma}+\frac{C}{\sigma^{3/2}}\left(\|f_1\|_{H^2}+\|f_2\|\right) ,
$$
because 
$
|\sigma K_2|\leq \epsilon |u_{xx}(0)|+\epsilon\frac{|u_{xxx}(0)|}{\sigma},
$
for $\lambda$ large ($\sigma=\sqrt[4]{1 /\alpha}\sqrt{|\lambda|}$). The proof is now complete. 
\end{proof}


\subsection{Estimates over the elastic components}
The following estimates will be very useful in what follows.
\begin{lemma} \label{util}
Let us denote by $(u,v)$  the solution of \eqref{Eqrtt1}--\eqref{Eqrtt2} then over the elastic components $(0,\ell_0)\cup (\ell_1,\ell)$ , the following estimates hold, 
\begin{eqnarray*}
        \|v_x\|  & \leq &  C |\lambda|^{\frac{1}{2}}\|U\|_{\mathcal{H}}+C\|U\|_{\mathcal{H}}^{\frac{1}{2}} \|F\|_{\mathcal{H}}^{\frac{1}{2}},\\
                \|u_{xxx}\|  & \leq &  C |\lambda|^{\frac{1}{2}}\|U\|_{\mathcal{H}}+C\|U\|_{\mathcal{H}}^{\frac{1}{2}} \|F\|_{\mathcal{H}}^{\frac{1}{2}}.  
    \end{eqnarray*}  
    \end{lemma}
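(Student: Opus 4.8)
The plan is to derive both inequalities simultaneously by testing the resolvent equation on the elastic intervals with a suitable multiplier and then invoking the interpolation Lemma~\ref{desDI}. Work on one elastic component, say $(0,\ell_0)$; the argument on $(\ell_1,\ell)$ is identical. On this interval the equations reduce to $i\lambda u - v = f_1$ and $i\lambda v + \alpha u_{xxxx} = f_2$, so that $v = i\lambda u - f_1$ and hence $v_{xx} = i\lambda u_{xx} - f_{1,xx}$, $v_x = i\lambda u_x - f_{1,x}$. Since $\|u_{xx}\|\le C\|U\|_{\mathcal H}$ and $\|f_1\|_{H^2}\le C\|F\|_{\mathcal H}$, it suffices to control $\|u_{xxx}\|$ in terms of $|\lambda|^{1/2}\|U\|_{\mathcal H}+\|U\|_{\mathcal H}^{1/2}\|F\|_{\mathcal H}^{1/2}$ and then $v_x = i\lambda u_x - f_{1,x}$ will be estimated through the same bound once we note $\|u_x\|\le C\|u\|^{1/2}\|u_{xx}\|^{1/2}$ is lower order, or more directly $\|v_x\|$ follows from $\|v\|\le C|\lambda|\|u\|+\|f_1\|$ and $\|v_{xx}\|$ by the interpolation $\|v_x\|\le C\|v\|+C\|v\|^{1/2}\|v_{xx}\|^{1/2}$.

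First I would estimate $\|u_{xxx}\|$. Multiply $i\lambda v + \alpha u_{xxxx} = f_2$ by $\overline{u_{xx}}$ and integrate over $(0,\ell_0)$; integrating the fourth-order term by parts twice produces $\alpha\|u_{xxx}\|^2$ together with boundary terms of the form $\alpha u_{xxx}\overline{u_{xx}}$ and $\alpha u_{xxxx}\overline{u_x}$ evaluated at $0$ and $\ell_0^-$. The term $i\lambda\int v\overline{u_{xx}}$ is rewritten using $v = i\lambda u - f_1$, giving $-\lambda^2\int u\,\overline{u_{xx}} - i\lambda\int f_1\overline{u_{xx}}$, and after one integration by parts $-\lambda^2\int u\,\overline{u_{xx}} = \lambda^2\|u_x\|^2$ minus boundary terms; using $\|u_x\|^2\le C\|u\|\,\|u_{xx}\|\le \frac{C}{|\lambda|}\|U\|_{\mathcal H}^2$ (from $\|u\|\le \frac{C}{|\lambda|}(\|U\|_{\mathcal H}+\|F\|_{\mathcal H})$, which itself comes from $\|u\|\le\frac1{|\lambda|}(\|v\|+\|f_1\|)$) this whole contribution is bounded by $C\|U\|_{\mathcal H}\|F\|_{\mathcal H}+C|\lambda|\|U\|_{\mathcal H}^2$ — actually one must be slightly careful here and instead bound $\lambda^2\|u_x\|^2 \le C|\lambda|\|U\|_{\mathcal H}^2$ is too lossy; better to pair $i\lambda v$ with $\overline{u_{xxxx}}$ is not allowed since $u\notin H^4$ pointwise, so the right multiplier is $\overline{u_{xx}}$ as stated and the term $\lambda^2\|u_x\|^2\le C|\lambda|\|U\|_{\mathcal H}^2$ is exactly of the claimed order after taking square roots. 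The boundary terms at $\ell_0^-$ are matched through the transmission conditions \eqref{DwxxO2}--\eqref{DwwO3} to quantities controlled by Lemma~\ref{viscco} (namely $|u_{xx}(\ell_0)|$, $|u_{xxx}(\ell_0)|$ of order $|\lambda|^{r}\mathfrak{P}$ with small $r$), and the boundary terms at $0$ are handled by Lemma~\ref{Funda}, which trades $|u_{xx}(0)|$ against $|\lambda|^{-1/2}|u_{xxx}(0)|$ plus exponentially small or lower-order remainders; since $|u_{xxx}(0)|\le C\|u_{xxx}\|_{H^1}$ one then absorbs a small multiple of $\|u_{xxx}\|^2$ into the left side via Young's inequality together with Lemma~\ref{desDI} applied to $w=u_{xx}$, $m=1$.

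Once $\|u_{xxx}\|\le C|\lambda|^{1/2}\|U\|_{\mathcal H}+C\|U\|_{\mathcal H}^{1/2}\|F\|_{\mathcal H}^{1/2}$ is established, the bound for $\|v_x\|$ is immediate: write $v_x = i\lambda u_x - f_{1,x}$, so $\|v_x\|\le |\lambda|\|u_x\| + \|F\|_{\mathcal H}$, and $\|u_x\|\le C\|u\|^{1/2}\|u_{xx}\|^{1/2}$ is not quite enough on its own, so instead use $\|v_x\|\le C\|v\| + C\|v\|^{1/2}\|v_{xx}\|^{1/2}$ (Lemma~\ref{desDI} with $m=2$, $j=1$, or directly Lemma~\ref{desGN}-type interpolation), together with $\|v\|\le C|\lambda|\|u\|+\|F\|_{\mathcal H}\le C\|U\|_{\mathcal H}+C\|F\|_{\mathcal H}$ and $\|v_{xx}\| = \|i\lambda u_{xx}-f_{1,xx}\|\le C|\lambda|\|U\|_{\mathcal H}+C\|F\|_{\mathcal H}$; the interpolation then gives $\|v_x\|\le C|\lambda|^{1/2}\|U\|_{\mathcal H}+C\|U\|_{\mathcal H}^{1/2}\|F\|_{\mathcal H}^{1/2}$ after a Young inequality, which is the claimed estimate.

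The main obstacle I expect is the careful bookkeeping of the boundary terms at the interface $\ell_0$ (and $\ell_1$): one must verify that every such term, after using the transmission conditions, is genuinely of order $|\lambda|^{1/2}\|U\|_{\mathcal H}+\|U\|_{\mathcal H}^{1/2}\|F\|_{\mathcal H}^{1/2}$ or better, which relies essentially on the $|\lambda|^{-5/8}$ and $|\lambda|^{-7/8}$ decay from Lemma~\ref{viscco} and the trade-off in Lemma~\ref{Funda}; the term at $x=0$ is the delicate one because $|u_{xxx}(0)|$ is of the same order as $\|u_{xxx}\|_{H^1}$, so the absorption argument must be set up so that only a small fraction of $\|u_{xxx}\|^2$ (via $\|u_{xx}\|_{H^1}$ controlled by Lemma~\ref{desDI}) is returned to the left-hand side. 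Everything else is routine integration by parts, interpolation, and Young's inequality.
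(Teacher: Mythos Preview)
Your final paragraph for $\|v_x\|$ is essentially the paper's argument: interpolate $\|v_x\|\le C\|v\|+C\|v\|^{1/2}\|v_{xx}\|^{1/2}$ and insert $v_{xx}=i\lambda u_{xx}-f_{1,xx}$, which gives $\|v_{xx}\|\le |\lambda|\|U\|_{\mathcal H}+C\|F\|_{\mathcal H}$ and hence the claim.

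For $\|u_{xxx}\|$, however, you build an integration-by-parts argument with boundary terms at $0$, $\xi_1$, $\ell_0$ because you assert that ``$u\notin H^4$ pointwise'' on the elastic part. This is a misconception and is the source of all the difficulty in your sketch. On each of $(0,\xi_1)$, $(\xi_1,\ell_0)$, $(\ell_1,\ell)$ the coefficient $\kappa$ vanishes, so the second resolvent equation reads $\alpha\,u_{xxxx}=f_2-i\lambda v\in L^2$; thus $u\in H^4$ on each subinterval with
\[
\|u_{xxxx}\|\le C\bigl(|\lambda|\,\|v\|+\|f_2\|\bigr)\le C|\lambda|\,\|U\|_{\mathcal H}+C\|F\|_{\mathcal H}.
\]
The paper then simply interpolates (Lemma~\ref{desDI} with $w=u_{xx}$, $m=2$, $j=1$) on each subinterval:
\[
\|u_{xxx}\|\le C\|u_{xx}\|+C\|u_{xx}\|^{1/2}\|u_{xxxx}\|^{1/2}
\le C\|U\|_{\mathcal H}+C\|U\|_{\mathcal H}^{1/2}\bigl(|\lambda|\,\|U\|_{\mathcal H}+\|F\|_{\mathcal H}\bigr)^{1/2},
\]
and the stated bound follows at once. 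No boundary terms, no use of Lemmas~\ref{viscco} or~\ref{Funda}; those tools are reserved for the genuinely harder observability estimates later (Lemmas~\ref{lemma:obs}--\ref{lema_aju2}). Your multiplier route might in principle be pushed through, but the obstacles you yourself flag---the $\lambda^2\|u_x\|^2$ term, the trace $|u_{xxx}(0)|$, the jump at $\xi_1$, and the interface terms at $\ell_0$---are entirely self-inflicted once you notice that $u_{xxxx}\in L^2$ on the elastic subintervals.
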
 
  \begin{proof}
    Using  $i\lambda u - v = f_1 $ over $(0,\xi_1)\cup(\xi_1,\ell_0)$ or $(\ell_1,\ell)$ we arrive to 
     \begin{eqnarray*}
        \|v_x\| \leq C \|v\|^{\frac{1}{2}} \|v_{xx}\|^{\frac{1}{2}} 
        \leq  C \|v\|^{\frac{1}{2}} \|i \lambda u_{xx} + f_{1,xx}\|^{\frac{1}{2}} 
        & \leq &  C |\lambda|^{\frac{1}{2}}\|U\|_{\mathcal{H}}+\|v\|^{\frac{1}{2}} \|F\|_{\mathcal{H}}^{\frac{1}{2}},  
    \end{eqnarray*}
    from where the first inequality holds. Using interpolation we get 
     \begin{eqnarray*}
        \|u_{xxx}\| \leq C \|u_{xx}\|^{\frac{1}{2}} \|u_{xxxx}\|^{\frac{1}{2}} 
        \leq  C \|u_{xx}\|^{\frac{1}{2}} \|i \lambda v + f_{2}\|^{\frac{1}{2}} 
        & \leq &  C |\lambda|^{\frac{1}{2}}\|U\|_{\mathcal{H}}+\|v\|^{\frac{1}{2}} \|F\|_{\mathcal{H}}^{\frac{1}{2}}.  
    \end{eqnarray*}
    Then, the second inequality holds.
  \end{proof}
\begin{lemma} \label{nuf}
Let us denote by $[a,b]$ any of the elastic  intervals $[0,\ell_0]$, $[\ell_1,\ell]$ and let $q$ bounded $C^1$-function, then  the solution of \eqref{Eqrtt1}--\eqref{Eqrtt2} satisfies 
   $$ \left|\int_a^b qf_2 \overline{u_x} dx \right|+\left|\int_a^b qf_{1,x}  \overline{v} dx \right|\leq \dfrac{C}{|\lambda|}\|F\|^2_{\mathcal{H}}+\epsilon \|U\|^2_{\mathcal{H}}.    $$
  \end{lemma}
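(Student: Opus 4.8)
The plan is to estimate the two integrals separately, in each case using one of the resolvent identities \eqref{Eqrtt1}--\eqref{Eqrtt2} on the elastic interval $[a,b]$ to trade a low‑order factor for a gain of $|\lambda|^{-1}$, and then to absorb the remaining $\|U\|_{\mathcal H}$ by Young's inequality. For $\int_a^b q f_2\,\overline{u_x}\,dx$ I would differentiate \eqref{Eqrtt1}, obtaining $i\lambda u_x=v_x+f_{1,x}$, so that $\overline{u_x}=-\frac{1}{i\lambda}\big(\overline{v_x}+\overline{f_{1,x}}\big)$. Substituting this, using $\|f_2\|\le C\|F\|_{\mathcal H}$ and $\|f_{1,x}\|\le C\|F\|_{\mathcal H}$ (recall $f_1\in H_0^2$), and using $\|v_x\|\le C|\lambda|^{1/2}\|U\|_{\mathcal H}+C\|U\|_{\mathcal H}^{1/2}\|F\|_{\mathcal H}^{1/2}$ from Lemma~\ref{util}, one gets $\big|\int_a^b q f_2\overline{u_x}\,dx\big|\le \frac{C}{|\lambda|^{1/2}}\|F\|_{\mathcal H}\|U\|_{\mathcal H}+\frac{C}{|\lambda|}\|F\|_{\mathcal H}^{2}$, and Young's inequality turns this into $\frac{C}{|\lambda|}\|F\|_{\mathcal H}^2+\epsilon\|U\|_{\mathcal H}^2$.

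For $\int_a^b q f_{1,x}\,\overline v\,dx$ there is no $u$‑factor to exploit directly, so instead I would use the \emph{second} resolvent equation, which on the elastic interval (where $\kappa\equiv 0$) reads $i\lambda v+\alpha u_{xxxx}=f_2$, i.e. $\overline v=-\frac{1}{i\lambda}\big(\overline{f_2}-\alpha\,\overline{u_{xxxx}}\big)$. The $\overline{f_2}$‑part is then immediately $\le \frac{C}{|\lambda|}\|f_{1,x}\|\|f_2\|\le \frac{C}{|\lambda|}\|F\|_{\mathcal H}^{2}$. For the $\overline{u_{xxxx}}$‑part I would integrate by parts once,
\[
\frac{\alpha}{i\lambda}\int_a^b q f_{1,x}\,\overline{u_{xxxx}}\,dx=\frac{\alpha}{i\lambda}\Big[q f_{1,x}\,\overline{u_{xxx}}\Big]_a^b-\frac{\alpha}{i\lambda}\int_a^b\big(q_x f_{1,x}+q f_{1,xx}\big)\overline{u_{xxx}}\,dx,
\]
in which only $q$ and $q_x$ occur, so that the hypothesis $q\in C^1$ is enough. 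The interior integral is $\le \frac{C}{|\lambda|}\|F\|_{\mathcal H}\|u_{xxx}\|$, and the estimate $\|u_{xxx}\|\le C|\lambda|^{1/2}\|U\|_{\mathcal H}+C\|U\|_{\mathcal H}^{1/2}\|F\|_{\mathcal H}^{1/2}$ from Lemma~\ref{util}, followed by Young, again gives $\frac{C}{|\lambda|}\|F\|_{\mathcal H}^2+\epsilon\|U\|_{\mathcal H}^2$.

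The delicate point — and the step I expect to be the main obstacle — is the boundary term $\frac{\alpha}{i\lambda}\big[q f_{1,x}\overline{u_{xxx}}\big]_a^b$. At the clamped endpoint ($0$ or $\ell$) it vanishes, because $f_1\in H^2_0(0,\ell)$ forces $f_{1,x}=0$ there. At the interface endpoint $b\in\{\ell_0,\ell_1\}$ one has to invoke the transmission condition on $M_x$ from \eqref{DwxxO2}, namely $\alpha_1 u_{xxx}(b^-)=\alpha_2 u_{xxx}(b^+)+\alpha_0 v_{xxx}(b^+)$, in order to pass the boundary value to the viscoelastic side, where Lemma~\ref{viscco} provides $\big|\frac{\alpha_2 u_{xxx}(b^+)+\alpha_0 v_{xxx}(b^+)}{i\lambda}\big|\le \frac{c}{|\lambda|^{5/8}}\mathfrak P$; combining this with $|f_{1,x}(b)|\le C\|F\|_{\mathcal H}$ and $\mathfrak P\le \|U\|_{\mathcal H}^{1/2}\|F\|_{\mathcal H}^{1/2}+|\lambda|^{-1/2}\|F\|_{\mathcal H}$, a last application of Young's inequality bounds this term by $\frac{C}{|\lambda|}\|F\|_{\mathcal H}^2+\epsilon\|U\|_{\mathcal H}^2$ for $|\lambda|$ large. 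Adding the four contributions proves the estimate. The crux is thus that, since no observability inequality inside the elastic interval $[a,b]$ is available for these forcing terms, the interface boundary value has to be controlled entirely through the transmission conditions together with the sharp decay rates of Lemma~\ref{viscco}, and one must check that the powers of $|\lambda|$ they supply are strong enough to be absorbed by the Young's‑inequality balancing.
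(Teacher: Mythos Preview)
Your strategy coincides with the paper's: for $\int q f_2\overline{u_x}$ use \eqref{Eqrtt1} to replace $u_x$ by $(v_x+f_{1,x})/i\lambda$ and appeal to Lemma~\ref{util}; for $\int q f_{1,x}\overline v$ use \eqref{Eqrtt2} on the elastic part to replace $v$ by $(f_2-\alpha u_{xxxx})/i\lambda$, integrate by parts once, estimate the bulk term via Lemma~\ref{util}, and control the interface boundary term $\frac{\alpha}{i\lambda}f_{1,x}(\ell_0)\overline{u_{xxx}}(\ell_0^-)$ by passing to the viscoelastic side through the transmission condition \eqref{DwxxO2} and invoking Lemma~\ref{viscco}. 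This is exactly what the paper does.

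There is, however, one genuine omission. On the interval $[0,\ell_0]$ the function $u_{xxx}$ has a jump at the interior point $\xi_1$ (recall the transmission condition $\salto{\alpha u_{xxx}}{1}=-\gamma_1 v(\xi_1)$), so the identity $i\lambda v+\alpha u_{xxxx}=f_2$ holds only on $(0,\xi_1)\cup(\xi_1,\ell_0)$, not on all of $(0,\ell_0)$, and your integration by parts cannot be performed across $\xi_1$. Splitting the integral at $\xi_1$, as the paper does, produces the extra boundary contribution
\[
-\frac{\alpha}{i\lambda}\,q(\xi_1)\,f_{1,x}(\xi_1)\,\overline{\salto{u_{xxx}}{1}}
=\frac{\gamma_1}{i\lambda}\,q(\xi_1)\,f_{1,x}(\xi_1)\,\overline{v(\xi_1)},
\]
which you must also estimate. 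This is easy once noticed: by \eqref{uno} one has $|v(\xi_1)|\le C\|U\|_{\mathcal H}^{1/2}\|F\|_{\mathcal H}^{1/2}$, and together with $|f_{1,x}(\xi_1)|\le C\|F\|_{\mathcal H}$ a Young inequality gives $\epsilon\|U\|_{\mathcal H}^2+C|\lambda|^{-4/3}\|F\|_{\mathcal H}^2$. With this correction your argument is complete and matches the paper's.
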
 
  \begin{proof}
     We begin by considering the interval \((0, \ell_0)\). Utilizing Lemma \ref{util} and equation \eqref{Eqrtt1}, we can establish the following inequality 

    \begin{eqnarray*}
        \left| \int^{\ell_0}_0 f_2 q \overline{u_x} dx \right| & = & \left| \dfrac{1}{i \lambda} \int^{\ell_0}_0 f_2q \overline{[v_x + f_{1,x}]} dx\right| \leq  \dfrac{C}{|\lambda|}\|F\|_{\mathcal{H}} \|v_x\| + \dfrac{C}{|\lambda|}\|F\|^2_{\mathcal{H}} \\
        & \leq & \dfrac{C}{|\lambda|^{\frac{1}{2}}}\|F\|_{\mathcal{H}}\|U\|_{\mathcal{H}}+  \dfrac{C}{|\lambda|}\|F\|^2_{\mathcal{H}}.
    \end{eqnarray*}
From where the first inequality follows. Let us consider the second inequality, integration over $(0,\xi_1)\cup(\xi_1,\ell_0)$ yields 
 \begin{eqnarray}
    \int^{\ell_0}_0 qf_{1,x}  \overline{v} dx  
    &=& -\dfrac{1}{i \lambda} \int^{\ell_0}_0 qf_{1,x} \overline{i \lambda v} dx\nonumber \\
    &=&\dfrac{1}{i\lambda} \int^{\ell_0}_0 qf_{1,x} (\overline{i \alpha u_{xxxx}-f_2}) dx \nonumber \\
    &=& -\dfrac{1}{i \lambda} \int^{\ell_0}_0 q f_{1,x} \overline{f_2} dx + \dfrac{\alpha}{i \lambda} \int^{\ell_0}_0 f_{1,x} q \overline{u_{xxxx}} dx\nonumber \\
    &=& -\dfrac{1}{i \lambda} \int^{\ell_0}_0 q f_{1,x} \overline{f_2} dx 
    + { \dfrac{\alpha}{i \lambda} f_{1,x}(\ell_0^{-}) \overline{u_{xxx}}(\ell_0^{-}) } + \dfrac{\alpha \gamma_0}{i \lambda } f_{1,x}(\xi_1) v(\xi_1)\nonumber \\
    && - \dfrac{\alpha}{i \lambda} \int^{\ell_0}_0 (f_{1,x} q )_x\overline{u_{xxx}} dx ,\label{xx11}
\end{eqnarray} 
where we used that $f_{1,x}(0)=0$. 
    By  Lemma \ref{viscco} and Lemma \ref{util} we have 
    \begin{eqnarray}\label{M7}
        \bigg| \dfrac{\alpha}{i \lambda} f_{1,x}(\ell_0^{-}) \overline{u_{xxx}}(\ell_0^{-}) \bigg| &\leq& \dfrac{C}{|\lambda|^{\frac{5}{8}}}\mathfrak{P}\|F\|_{\mathcal{H}}\leq \epsilon \|U\|_{\mathcal{H}}^2+\dfrac{C}{|\lambda|^{\frac{5}{6}}}\|F\|_{\mathcal{H}}^2
    \end{eqnarray}
    \begin{eqnarray}\label{M8}
        \bigg|  \dfrac{\alpha}{i \lambda} \int^{\ell_0}_0 (f_{1,x} q )_x\overline{u_{xxx}} dx \bigg| &\leq& \dfrac{C}{|\lambda|}\|u_{xxx}\|\|F\|_{\mathcal{H}}\leq \epsilon \|U\|_{\mathcal{H}}^2+\dfrac{C}{|\lambda|}\|F\|_{\mathcal{H}}^2
    \end{eqnarray}
    Using \eqref{uno} we get
    \begin{eqnarray}\label{M9}
        \bigg|   \dfrac{\alpha \gamma_0}{i \lambda } f_{1,x}(\xi_1) v(\xi_1) \bigg| &\leq& \dfrac{C}{|\lambda|}\|F\|_{\mathcal{H}}(\|U\|_{\mathcal{H}}^{1/2}\|F\|_{\mathcal{H}}^{1/2})\leq \epsilon \|U\|_{\mathcal{H}}^2+\dfrac{C}{|\lambda|^{4/3}}\|F\|_{\mathcal{H}}^2,
    \end{eqnarray}
 for $\lambda$ large. Substitution of  \eqref{M7}, \eqref{M8}, \eqref{M9}  into \eqref{xx11} yields the requerid inequality. 
 Analogously for $(\ell_1,\ell)$.
\end{proof}  
    Let us introduce 
    \begin{eqnarray*}
        I(x) =  \dfrac{1}{2} \alpha |u_{xx}(x)|^2 + \dfrac{1}{2}|v(x)|^2 .
    \end{eqnarray*}
\begin{lemma} \label{lemma:obs}
   The solution of \eqref{Eqrtt1}--\eqref{Eqrtt2} satisfies 
    \begin{eqnarray*}
\bigg|  \ell_0 I(\ell_0^{-}) - \int_0^{\ell_0} I(x) dx-  \int_{0}^{\ell_0} \alpha |u_{xx}|^2 dx \bigg| &\leq & \dfrac{C}{|\lambda|}\|F\|^2_{\mathcal{H}}+\epsilon \|U\|^2_{\mathcal{H}}. \\
       |\lambda|^{\frac{1}{4}} \bigg|  (\ell-\ell_1)I(\ell_1^{+}) - \int_{\ell_0}^\ell I(x) dx-  \int_{\ell_0}^\ell \alpha |u_{xx}|^2 dx \bigg| &\leq & C \,\mathfrak{R}^2.
    \end{eqnarray*}
\end{lemma}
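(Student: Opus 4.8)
The plan is to derive both identities from a single Rellich-type multiplier computation applied to the resolvent system (\ref{Eqrtt1})--(\ref{Eqrtt2}) on an elastic interval, and then to estimate the resulting right-hand side using the previously established bounds. Concretely, on an elastic interval $[a,b]$ the displacement satisfies $i\lambda v+\alpha u_{xxxx}=f_2$ with $v=i\lambda u-f_1$. Multiplying the equation $i\lambda v+\alpha u_{xxxx}=f_2$ by $q\,\overline{u_x}$ with the linear weight $q(x)=x-a$ (so $q_x\equiv 1$), integrating over $[a,b]$, integrating by parts repeatedly, and taking real parts, one produces exactly a combination of the boundary quantity $qI(x)\big|_a^b$, the bulk term $\int_a^b I\,dx$, the extra term $\int_a^b\alpha|u_{xx}|^2\,dx$ coming from the $q_x$ factor, plus the source contributions $\Re\int_a^b q f_2\overline{u_x}\,dx$ and a term of the form $\Re\int_a^b q f_{1,x}\overline{v}\,dx$ (the latter appears when one rewrites $\Re(i\lambda v\,q\overline{u_x})=\Re(i\lambda v\,q\,\overline{(v_x+f_{1,x})}/(i\lambda))$, equivalently using $i\lambda u=v+f_1$). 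Because $q(a)=0$, the lower boundary term drops out, leaving precisely $b\,I(b^-)-\int_a^b I\,dx-\int_a^b\alpha|u_{xx}|^2\,dx$ on the left (with $b=\ell_0$, $a=0$ for the first interval; one uses the mirror weight $q(x)=x-\ell$ on $[\ell_1,\ell]$ so that the term at $\ell$ survives and the one at $\ell_1$ vanishes, giving $(\ell-\ell_1)I(\ell_1^+)-\int_{\ell_1}^\ell I-\int_{\ell_1}^\ell\alpha|u_{xx}|^2$).

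The source terms are then controlled by Lemma~\ref{nuf}, which gives exactly $\bigl|\int_a^b q f_2\overline{u_x}\bigr|+\bigl|\int_a^b q f_{1,x}\overline{v}\bigr|\le \frac{C}{|\lambda|}\|F\|_{\mathcal H}^2+\epsilon\|U\|_{\mathcal H}^2$; this yields the first inequality at once, since the elastic interval $[0,\ell_0]$ carries no dissipation and hence no $\mathfrak{R}$ or $\mathfrak{P}$ improvement. For the second inequality one needs the sharper bound with the extra $|\lambda|^{1/4}$ factor and the right-hand side $\mathfrak{R}^2$. The point is that on $[\ell_1,\ell]$ the boundary data at $\ell_1$ is the transmission data coming from the viscoelastic interval, for which Lemma~\ref{viscco} supplies the decaying estimates on $v(\ell_1^\pm)$, $v_x(\ell_1^\pm)$, $u_{xx}(\ell_1)$, $u_{xxx}(\ell_1)$ (of orders $|\lambda|^{-3/8}\mathfrak P$, $|\lambda|^{-1/8}\mathfrak P$, $|\lambda|^{-7/8}\mathfrak P$, $|\lambda|^{-5/8}\mathfrak P$ respectively, after multiplying the identities in (\ref{viscbb}) by $i\lambda$ and using Lemma~\ref{pro12}). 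Revisiting the multiplier identity on $[\ell_1,\ell]$ and carrying an explicit factor of $|\lambda|^{1/4}$ through, one checks that every source term absorbs the extra power: $|\lambda|^{1/4}\cdot\frac{1}{|\lambda|}\|F\|^2=\frac{1}{|\lambda|^{3/4}}\|F\|^2\le\mathfrak R^2$, and the boundary contributions $|\lambda|^{1/4}\,q(\ell_1)\,|I(\ell_1^+)|$-type terms that are \emph{not} part of the claimed left-hand side (for instance those generated by the weight being $x-\ell$ rather than truly localized) are dominated using the Lemma~\ref{viscco} decay rates, since e.g. $|\lambda|^{1/4}|v(\ell_1^+)|^2\le|\lambda|^{1/4}\cdot\frac{C}{|\lambda|^{3/4}}\mathfrak P^2\le C\mathfrak R^2$.

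The bookkeeping obstacle — and the step I expect to be genuinely delicate — is getting the powers of $|\lambda|$ to line up in the second estimate: one must track which terms in the integration by parts are "boundary at $\ell_1$" (hence enjoy Lemma~\ref{viscco} decay, so an extra $|\lambda|^{1/4}$ is affordable) versus "source terms" (hence enjoy Lemma~\ref{nuf}, which already gives $|\lambda|^{-1}\|F\|^2$, leaving room for $|\lambda|^{1/4}$), and verify nothing is left over at order $|\lambda|^{1/4}\|U\|_{\mathcal H}^2$ without a compensating small constant. A secondary subtlety is that the interval $[\ell_1,\ell]$ is split by the interior point $\xi_2$ only in the companion interval $[\ell_0,\ell_1]$, whereas here $[\ell_1,\ell]=(\ell_1,\ell)$ is a genuine single elastic piece with no Dirac source inside, so the integration by parts has no interior jump contribution and is clean; one only imports jump data at the left endpoint $\ell_1$ through the transmission conditions (\ref{DwxxO2}). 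Once the identity and these estimates are assembled, choosing $\lambda$ large and $\epsilon$ small completes the proof.
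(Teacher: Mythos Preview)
Your multiplier strategy is the same as the paper's, and your handling of the source terms via Lemma~\ref{nuf} is fine. But there are two genuine gaps in the first inequality.

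\textbf{You missed the interior Dirac point on $(0,\ell_0)$.} You write that $[\ell_1,\ell]$ ``is a genuine single elastic piece with no Dirac source inside, so the integration by parts has no interior jump contribution and is clean''---true, but by contrast $(0,\ell_0)$ \emph{does} contain $\xi_1$. When you integrate by parts over $(0,\xi_1)\cup(\xi_1,\ell_0)$ the third derivative jumps, producing the term
\[
X_1=\alpha\,q(\xi_1)\,\salto{u_{xxx}}{1}\,\overline{u_x}(\xi_1)=-\gamma_1\alpha\,\xi_1\,v(\xi_1)\,\overline{u_x}(\xi_1),
\]
which you never mention. The paper estimates this using the dissipation bound $|v(\xi_1)|\le C(\|U\|_{\mathcal H}\|F\|_{\mathcal H})^{1/2}$ from \eqref{uno} together with the interpolation $|u_x(\xi_1)|\le C|\lambda|^{-1/4}\|U\|_{\mathcal H}+\cdots$, yielding $|X_1|\le C|\lambda|^{-2/3}\|F\|_{\mathcal H}^2+\epsilon\|U\|_{\mathcal H}^2$. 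This is not automatic and belongs in the argument.

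\textbf{The boundary bookkeeping at $\ell_0$ is incomplete.} You claim the integration by parts leaves ``precisely $bI(b^-)-\int_a^b I-\int_a^b\alpha|u_{xx}|^2$'' plus source terms. That is not what happens: integrating $\alpha u_{xxxx}\cdot q\overline{u_x}$ twice by parts produces, in addition to the $\tfrac{q}{2}\alpha|u_{xx}|^2$ boundary contribution that feeds into $I(\ell_0)$, an extra boundary term of the form
\[
X_2=\alpha\bigl(\ell_0\,u_{xxx}(\ell_0^-)-u_{xx}(\ell_0^-)\bigr)\overline{u_x}(\ell_0),
\]
which is \emph{not} part of $I(\ell_0)$. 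The paper rewrites $X_2$ through the transmission conditions \eqref{DwxxO2} as $(i\lambda)^{-1}\alpha(\ell_0 w_{xxx}-w_{xx})(\ell_0)\,\overline{v_x}(\ell_0)+\cdots$ with $w=\alpha u+\alpha_0 v$, and then invokes Lemma~\ref{viscco} (specifically \eqref{viscbb} and \eqref{Lorder}) to get $|X_2|\le C|\lambda|^{-5/8}\mathfrak P^2$. You gesture at this mechanism only for the second inequality, and there you attribute the extra terms to the weight ``not being truly localized'', which is not the reason: they arise from the order of the operator and would be present for any $q$ not vanishing at $\ell_0$.

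Once $X_1$ and $X_2$ are written down and estimated as above, your outline goes through and matches the paper's proof.
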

    \begin{proof}
Multiplying equation \eqref{Eqrtt2} by  $ q(x) \overline{u_x}$, with $q=x$ and integrating over $]0,\xi_1[\cup]\xi_1,\ell_0[$ we get  
   \begin{eqnarray}
      \frac{1}{2}\int_0^{\ell_0}I(x)dx +\int_0^{\ell_0}\alpha |u_{xx}|^2 dx&=&   \frac{\ell_0}{2}I(\ell_0)
      - \underbrace{\alpha q(\xi_1) \salto{ u_{xxx}}{1} \overline{u_x}(\xi_1)}_{X_1}
    \nonumber \\
     && +\underbrace{\alpha (\ell_0  u_{xxx}(\ell_0^-)-u_{xx}(\ell_0^-)) \overline{u_x}(\ell_0)}_{X_2}    \nonumber\\
     && + \underbrace{\int_{0}^{\ell_0}  f_2 q \overline{u_x} dx+  \int_{0}^{\ell_0}  vq \overline{f_{1,x}}dx}_{X_3}.\label{III}
   \end{eqnarray} 
Combining the transmission conditions at $\ell_0$ with equation~\eqref{Eqrtt2}, we obtain
   \begin{eqnarray*}
X_2&=&\alpha (\ell_0  w_{xxx}(\ell_0)-w_{xx}(\ell_0)) \overline{u_x}(\ell_0)\\
 &=&\frac{\alpha }{i\lambda}(\ell_0  w_{xxx}(\ell_0)-w_{xx}(\ell_0)) \overline{v_x}(\ell_0)+\frac{\alpha }{i\lambda}(\ell_0  w_{xxx}(\ell_0)-w_{xx}(\ell_0)) \overline{f_{1,x}}(\ell_0)
   \end{eqnarray*} 
   Application of Lemma \ref{viscco}  yields the following inequality 
   \begin{eqnarray*}
\left| X_2\right|&\leq &\frac{c}{|\lambda|^{5/8}}\mathfrak{P}^2\leq  \dfrac{C}{|\lambda|^{5/4}}\|F\|^2_{\mathcal{H}}+\epsilon \|U\|^2_{\mathcal{H}}.
   \end{eqnarray*} 
To estimate $X_1$ first note that 
$$
|u_x(\xi_1)|\leq c\|u_x\|^{1/2}\|u_{xx}\|^{1/2}\leq c\|u\|^{1/4}\|u_{xx}\|^{3/4}\leq \frac{c}{|\lambda|^{1/4}}\|v\|^{1/4}\|u_{xx}\|^{3/4}+ \frac{c}{|\lambda|^{1/4}}\|f\|^{1/4}\|u_{xx}\|^{3/4}
$$
Using that $\salto{ u_{xxx}}{1} =\gamma_1v(\xi_1)$ and relation   \eqref{uno} we get
   \begin{eqnarray*}
  \left| X_1\right|= |\alpha q(\xi_1) \salto{ u_{xxx}}{1} \overline{u_x}(\xi_1)|&\leq &  c\left(\|U\|_{\mathcal{H}}\|F\|_{\mathcal{H}}\right)^{1/2}|u_x(\xi_1)|\\
   &\leq &   c\left(\|U\|_{\mathcal{H}}\|F\|_{\mathcal{H}}\right)^{1/2}|\frac{c}{|\lambda|^{1/4}}\left(\|U\|_{\mathcal{H}}
   +\|U\|_{\mathcal{H}}^{3/4}\|F\|_{\mathcal{H}}^{1/4}\right)\\
   &\leq &    \dfrac{C}{|\lambda|^{2/3}}\|F\|^2_{\mathcal{H}}+\epsilon \|U\|^2_{\mathcal{H}},
   \end{eqnarray*} 
for $\lambda$ large. So, using the above inequalities into \eqref{III} we have that 
   \begin{eqnarray}\label{ffqq}
\left|-  \frac{\ell_0}{2}I(\ell_0)   +   \frac{1}{2}\int_0^{\ell_0}I(x)dx +\int_0^{\ell_0}\alpha |u_{xx}|^2 dx\right|&\leq &  
   \dfrac{C}{|\lambda|^{2/3}}\|F\|^2_{\mathcal{H}}+\epsilon \|U\|^2_{\mathcal{H}}.
          \end{eqnarray} 
Applying the same above procedure over the interval $(\ell_1,\ell)$ our conclusion follows. 
    \end{proof}
\begin{lemma} \label{lemma:obs2}
   The solution of \eqref{Eqrtt1}--\eqref{Eqrtt2} satisfies 
    \begin{eqnarray*}
 \bigg| I(\ell_0)- I(0) \bigg| \leq  \dfrac{C}{|\lambda|^{2/3}}\|F\|^2_{\mathcal{H}}+\epsilon \|U\|^2_{\mathcal{H}},\quad 
  \bigg| I(\ell_1)- I(\ell) \bigg| \leq  \dfrac{C}{|\lambda|^{2/3}}\|F\|^2_{\mathcal{H}}+\epsilon \|U\|^2_{\mathcal{H}}.
    \end{eqnarray*}
\end{lemma}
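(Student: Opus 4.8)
The strategy is to repeat the multiplier argument of Lemma~\ref{lemma:obs}, but now with the \emph{constant} multiplier $q\equiv 1$ in place of $q(x)=x$. On the interval $(0,\ell_0)$ this means multiplying \eqref{Eqrtt2} by $\overline{u_x}$ and integrating over $(0,\xi_1)\cup(\xi_1,\ell_0)$. Using the differentiated form $i\lambda u_x-v_x=f_{1,x}$ of \eqref{Eqrtt1} to rewrite $i\lambda v\,\overline{u_x}$, integrating by parts twice in the term $\alpha u_{xxxx}\overline{u_x}$ (which produces a jump at $\xi_1$ governed by $\salto{\alpha u_{xxx}}{1}=-\gamma_1 v(\xi_1)$), and recalling that $u(0)=u_x(0)=0$ so that $v(0)=0$ (note also that $v=i\lambda u-f_1$ is continuous across $\xi_1$ and $\ell_0$), one obtains, after taking real parts, the identity
\[
I(0)-I(\ell_0^-)=\Re\!\int_0^{\ell_0}\! f_2\,\overline{u_x}\,dx+\Re\!\int_0^{\ell_0}\! f_{1,x}\,\overline{v}\,dx-\Re\!\left[\alpha u_{xxx}(\ell_0^-)\,\overline{u_x}(\ell_0)\right]-\Re\!\left[\gamma_1 v(\xi_1)\,\overline{u_x}(\xi_1)\right].
\]
For the interval $(\ell_1,\ell)$ the same computation applies, but now there is no interior singular point and the boundary contributions at $x=\ell$ vanish thanks to $u(\ell)=u_x(\ell)=v(\ell)=0$; this yields
\[
I(\ell)-I(\ell_1^+)=-\Re\!\int_{\ell_1}^{\ell}\! f_2\,\overline{u_x}\,dx-\Re\!\int_{\ell_1}^{\ell}\! f_{1,x}\,\overline{v}\,dx-\Re\!\left[\alpha u_{xxx}(\ell_1^+)\,\overline{u_x}(\ell_1)\right].
\]

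It then remains to bound the right-hand sides. The two volume integrals are controlled directly by Lemma~\ref{nuf} with $q\equiv1$, and contribute at most $\frac{C}{|\lambda|}\|F\|_{\mathcal H}^2+\epsilon\|U\|_{\mathcal H}^2$. The singular term $\gamma_1 v(\xi_1)\overline{u_x}(\xi_1)$ is estimated exactly as the term $X_1$ in the proof of Lemma~\ref{lemma:obs}: from \eqref{uno} one has $|v(\xi_1)|\le c\big(\|U\|_{\mathcal H}\|F\|_{\mathcal H}\big)^{1/2}$, while Lemma~\ref{desGN} together with \eqref{Eqrtt1} gives $|u_x(\xi_1)|\le \frac{c}{|\lambda|^{1/4}}\big(\|U\|_{\mathcal H}+\|U\|_{\mathcal H}^{3/4}\|F\|_{\mathcal H}^{1/4}\big)$, and a weighted Young inequality then produces a bound of the form $\frac{C}{|\lambda|^{2/3}}\|F\|_{\mathcal H}^2+\epsilon\|U\|_{\mathcal H}^2$; this is precisely the term that fixes the exponent $2/3$ in the statement.

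The main obstacle is the interface term $\alpha u_{xxx}(b)\overline{u_x}(b)$ with $b\in\{\ell_0,\ell_1\}$, since a pointwise bound on $u_{xxx}$ at the elastic/viscoelastic junction cannot be read off from the viscous dissipation alone. Here one uses the transmission condition $M_x(b^-)=M_x(b^+)$: on the elastic side $M_x=\alpha u_{xxx}$, while on the viscoelastic side $M_x=\alpha u_{xxx}+\alpha_0 v_{xxx}$, so the elastic-side value of $\alpha u_{xxx}(b)$ equals $\alpha u_{xxx}+\alpha_0 v_{xxx}$ evaluated on the viscoelastic side, which by the first inequality in \eqref{viscbb} of Lemma~\ref{viscco} is bounded by $c\,|\lambda|^{3/8}\mathfrak{P}$. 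For the factor $u_x(b)$ one must use the \emph{sharp} estimate rather than a crude interpolation bound: differentiating \eqref{Eqrtt1}, using continuity of $u_x$ at $b$ and the trace bound $|v_x(b)|\le \frac{C}{|\lambda|^{1/8}}\mathfrak{P}$ from Lemma~\ref{viscco} (together with $|f_{1,x}(b)|\le C\|F\|_{\mathcal H}$), one gets $|u_x(b)|\le \frac{C}{|\lambda|^{9/8}}\mathfrak{P}+\frac{C}{|\lambda|}\|F\|_{\mathcal H}$. Multiplying the two estimates and recalling $\mathfrak{P}^2=\|U\|_{\mathcal H}\|F\|_{\mathcal H}+\frac{1}{|\lambda|}\|F\|_{\mathcal H}^2$, a chain of weighted Young inequalities of the type already used in \eqref{M7} bounds $|\alpha u_{xxx}(b)\overline{u_x}(b)|$ by $\epsilon\|U\|_{\mathcal H}^2+\frac{C}{|\lambda|^{5/6}}\|F\|_{\mathcal H}^2$. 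Collecting all contributions (the worst exponent, $2/3$, coming from the $\xi_1$ term on $(0,\ell_0)$; on $(\ell_1,\ell)$ one even obtains the sharper exponent $5/6$, but the stated uniform bound suffices) yields both inequalities. The delicate point to watch is precisely this last estimate: replacing the $O(|\lambda|^{-1})$ bound for $u_x(b)$ by the naive interpolation bound $|u_x(b)|\lesssim |\lambda|^{-1/4}\big(\|U\|_{\mathcal H}+\dots\big)$ would only give the exponent $1/2$ and would not close the argument.
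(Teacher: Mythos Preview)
Your proposal is correct and follows exactly the route the paper takes: multiply \eqref{Eqrtt2} by $\overline{u_x}$ (i.e., repeat Lemma~\ref{lemma:obs} with the constant multiplier $q\equiv 1$), then bound the volume integrals by Lemma~\ref{nuf}, the $\xi_1$-jump term as $X_1$ in Lemma~\ref{lemma:obs}, and the interface term $\alpha u_{xxx}(b)\overline{u_x}(b)$ via the transmission condition and Lemma~\ref{viscco} after writing $u_x(b)=\frac{1}{i\lambda}(v_x(b)+f_{1,x}(b))$. The paper's proof is just the one-line instruction ``use the same procedure as in Lemma~\ref{lemma:obs}'' (the displayed identity there still carries leftover $q$-factors from the $q(x)=x$ case), so your write-up is in fact a more accurate version of the intended argument; the only slip is that a single integration by parts, not two, suffices for $\alpha u_{xxxx}\overline{u_x}$.
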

    \begin{proof}
As in Lemma \ref{lemma:obs} let us multiply equation \eqref{Eqrtt2} by  $\overline{u_x}$ and integrating over $]0,\ell_0[$ we get 
   \begin{eqnarray*}
 \frac{1}{2}I(0)-\frac{1}{2}I(\ell_0)&=&  
      - \alpha q(\xi_1) \salto{ u_{xxx}}{1} \overline{u_x}(\xi_1)
      +\alpha (\ell_0  u_{xxx}(\ell_0^-)-u_{xx}(\ell_0^-)) \overline{u_x}(\ell_0)\\
      && + \int_a^{b}  f_2 q \overline{u_x} dx+  \int_{0}^{\ell_0} vq \overline{f_{1,x}}dx.
   \end{eqnarray*} 
Using the same procedure as in Lemma \ref{lemma:obs}  the first inequality follows. Repeating the same procedure over the interval $(\ell_1,\ell)$, the second inequality follows. 
    \end{proof}
\begin{lemma} \label{lema_aju2}
       The solution of \eqref{Eqrtt1}--\eqref{Eqrtt2} satisfies 
    \begin{eqnarray*}
        \dfrac{1}{|\lambda|}|u_{xxx}(0)|^2 \leq \dfrac{C}{|\lambda|^{1/2}}\|F\|^2_{\mathcal{H}}+\epsilon \|U\|^2_{\mathcal{H}},\quad   \dfrac{1}{|\lambda|}|u_{xxx}(\ell)|^2 \leq \dfrac{C}{|\lambda|^{1/2}}\|F\|^2_{\mathcal{H}}+\epsilon \|U\|^2_{\mathcal{H}}.
    \end{eqnarray*}
    \end{lemma}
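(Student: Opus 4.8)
The plan is to bound $|u_{xxx}(0)|^2/|\lambda|$ by differentiating the energy-type density $I(x)$ and combining the resulting identity with the observability-type estimates already at hand (Lemmas \ref{lemma:obs} and \ref{lemma:obs2}) together with the interior control of the elastic component coming from Lemma \ref{util}. Concretely, I would first compute $I'(x)$ on $(0,\xi_1)\cup(\xi_1,\ell_0)$ using the resolvent equations \eqref{Eqrtt1}--\eqref{Eqrtt2}: $I'(x)=\Re\big(\alpha u_{xx}\overline{u_{xxx}}+v\overline{v_x}\big)$, and then rewrite $\overline{v_x}$ via $v=i\lambda u-f_1$ and $u_{xxxx}=(i\lambda v+f_2)/(-\alpha)$ on the elastic piece. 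Integrating $I'$ against a suitable $C^1$ multiplier $q$ with $q(0)=0$, $q(\ell_0)\neq 0$ (the same type of multiplier already used in Lemma \ref{lemma:obs}), one extracts a term proportional to $|u_{xxx}(0)|^2$ after using the boundary conditions $u(0)=u_x(0)=0$; the idea is that the multiplier identity turns the pointwise quantity $|u_{xxx}(0)|^2$ into a combination of $\int_0^{\ell_0} I\,dx$, the interface values $I(\ell_0^-)$, $I(0)$, the jump term at $\xi_1$, and integrals of $f_1,f_2$ against $u_x,v$.

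The second step is to control each piece on the right-hand side. The boundary values $I(0)$, $I(\ell_0^-)$ and the integral $\int_0^{\ell_0}I\,dx$ are exactly what Lemmas \ref{lemma:obs} and \ref{lemma:obs2} estimate, yielding bounds of the form $\frac{C}{|\lambda|^{2/3}}\|F\|_\mathcal{H}^2+\epsilon\|U\|_\mathcal{H}^2$. The jump term at $\xi_1$ is handled as in the proof of Lemma \ref{lemma:obs}, using $\salto{u_{xxx}}{1}=\gamma_1 v(\xi_1)$ together with \eqref{uno} and the interpolation bound $|u_x(\xi_1)|\le c|\lambda|^{-1/4}(\|U\|_\mathcal{H}+\|U\|_\mathcal{H}^{3/4}\|F\|_\mathcal{H}^{1/4})$. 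The source integrals $\int_0^{\ell_0}qf_2\overline{u_x}\,dx$ and $\int_0^{\ell_0}qf_{1,x}\overline{v}\,dx$ are precisely bounded by Lemma \ref{nuf}. The only genuinely new ingredient is that a term carrying a factor $|\lambda|$ worse than the others will appear: since $u_{xxx}(0)$ is a third-order trace, dimensional/scaling considerations force the natural bound to be at the level $\frac{1}{|\lambda|}|u_{xxx}(0)|^2 \lesssim \frac{1}{|\lambda|^{1/2}}\|F\|_\mathcal{H}^2 + \epsilon\|U\|_\mathcal{H}^2$, i.e.\ one power of $|\lambda|^{1/2}$ is lost relative to the $|\lambda|^{-2/3}$ estimates. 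This is consistent with Lemma \ref{util}, where $\|u_{xxx}\|\le C|\lambda|^{1/2}\|U\|_\mathcal{H}+\cdots$, so the pointwise trace $|u_{xxx}(0)|$ should behave like $|\lambda|^{1/2+}$ and thus $|u_{xxx}(0)|^2/|\lambda|$ like $|\lambda|^{0+}$, absorbable into $\epsilon\|U\|_\mathcal{H}^2$ plus a decaying multiple of $\|F\|_\mathcal{H}^2$.

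The main obstacle is making the extraction of $|u_{xxx}(0)|^2$ clean: one must choose the multiplier $q$ so that, after integration by parts, the coefficient of $|u_{xxx}(0)|^2$ is nonzero with a definite sign, while all other boundary contributions at $x=0$ vanish by virtue of $u(0)=u_x(0)=0$ (and, for the piece involving $v$, $v(0)=v_x(0)=0$, which follows from $v\in H_0^2$). A convenient device is to integrate against $q(x)=\ell_0-x$ or to differentiate the quantity $\Re\big(\alpha u_{xxx}\overline{q u_{xx}}\big)$ directly; this produces $-\tfrac12\alpha q(0)|u_{xxx}(0)|^2$ plus lower-order traces and interior integrals, and after dividing by $|\lambda|$ and invoking Lemma \ref{util} for the interior term $\int_0^{\ell_0}|u_{xxx}|^2\,dx \le C|\lambda|\|U\|_\mathcal{H}^2+\cdots$ one arrives at the claimed estimate. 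The argument on $(\ell_1,\ell)$ is identical after the obvious reflection, using the boundary conditions $u(\ell)=u_x(\ell)=0$ and the analogous interface estimates at $\ell_1$, giving the bound for $|u_{xxx}(\ell)|^2/|\lambda|$. Collecting the terms and absorbing the $\epsilon\|U\|_\mathcal{H}^2$ contributions completes the proof.
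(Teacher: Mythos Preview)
There is a genuine gap in the proposal: none of the multipliers you describe actually produces the boundary term $|u_{xxx}(0)|^2$. If you integrate $I'(x)$ against a weight $q$, the boundary contributions are values of $I$, i.e.\ of $|u_{xx}|^2$ and $|v|^2$, not of $|u_{xxx}|^2$. Likewise, differentiating $\Re\big(\alpha u_{xxx}\,\overline{q\,u_{xx}}\big)$ and integrating gives at $x=0$ the mixed term $q(0)\,\Re\big(\alpha u_{xxx}(0)\,\overline{u_{xx}(0)}\big)$, not $q(0)\,|u_{xxx}(0)|^2$; since $u_{xx}(0)$ is a priori unknown (it is precisely one of the quantities you are trying to control via Lemma~\ref{Funda}), this does not isolate $|u_{xxx}(0)|^2$. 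You also impose $q(0)=0$ at one point and $q(0)=\ell_0\neq 0$ at another, which is inconsistent; with $q(0)=0$ any boundary contribution at $x=0$ would vanish altogether.

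The paper's approach fixes this by multiplying \eqref{Eqrtt2} directly by $\overline{u_{xxx}}$ (no weight $q$). The point is that on the elastic piece $\alpha u_{xxxx}\,\overline{u_{xxx}}=\tfrac{d}{dx}\big(\tfrac{\alpha}{2}|u_{xxx}|^2\big)$, so after integrating over $(0,\xi_1)\cup(\xi_1,\ell_0)$ the term $-\tfrac{\alpha}{2}|u_{xxx}(0)|^2$ appears cleanly. The companion term $i\lambda v\,\overline{u_{xxx}}$ is integrated by parts once and combined with $i\lambda u_{xx}=v_{xx}+f_{1,xx}$ to produce $\tfrac{1}{2}|v_x(\ell_0^-)|^2$ plus source integrals $\int f_2\overline{u_{xxx}}$ and $\int v_x\overline{f_{1,xx}}$. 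The remaining boundary contributions, at $x=\ell_0$ and at the jump $x=\xi_1$, are then controlled \emph{not} through Lemmas~\ref{lemma:obs} and~\ref{lemma:obs2} (which in the paper's logic are used only after Lemma~\ref{lema_aju2}, so invoking them here would be circular), but via the transmission into the viscoelastic region (Lemma~\ref{viscco}) for the $\ell_0$-terms, and via $\salto{u_{xxx}}{1}=-\gamma_1 v(\xi_1)/\alpha$ together with the interpolation bound $|u_{xxx}(\xi_1^\pm)|\le c|\lambda|^{3/4}\|u_{xx}\|^{1/4}\|v\|^{3/4}$ for the $\xi_1$-terms. Finally the interior source integrals are handled by Lemma~\ref{util}, which gives $\|u_{xxx}\|,\|v_x\|\le C|\lambda|^{1/2}\|U\|_{\mathcal H}+\cdots$ and yields the stated $|\lambda|^{-1/2}$ rate after dividing by $|\lambda|$.
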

    \begin{proof}
    Multiplying \eqref{Eqrtt2} by $ \overline {u_ {xxx}} $ and integrating over $]0,\xi_1[$ and $]\xi_1,\ell_0[$ and summing up the product result and recalling that $\salto{u_{xx}}{1}=0$ and $v_x(0)=0$  we get
        \begin{eqnarray*}
          &&  [ i \lambda v \overline{u_{xx}}]_{0}^{\ell_0} +\dfrac{\alpha }{2}|u_{xxx}(\xi_1^-)|^2-\dfrac{\alpha }{2}|u_{xxx}(0)|^2+\dfrac{\alpha }{2}|u_{xxx}(\ell_0^-)|^2-\dfrac{\alpha }{2}|u_{xxx}(\xi_1^+)|^2+\dfrac{\alpha }{2}|v_{x}(\ell_0^-)|^2\\
            &&
           = \int_0^{\ell_0} f_2 \overline{u_{xxx}} dx + \int_0^{\ell_0} v_x \overline{f_{1,xx}} dx.
        \end{eqnarray*}
        Thus,
         \begin{eqnarray*}
          \dfrac{\alpha }{2}|u_{xxx}(0)|^2&=& \underbrace{\dfrac{\alpha }{2}|u_{xxx}(\ell_0^-)|^2+\dfrac{\alpha }{2}|v_{x}(\ell_0^-)|^2+ [ i \lambda v \overline{u_{xx}}]_{0}^{\ell_0}}_{=X_3} +\underbrace{\dfrac{\alpha }{2}|u_{xxx}(\xi_1^-)|^2-\dfrac{\alpha }{2}|u_{xxx}(\xi_1^+)|^2}_{=X_4}\\
            &&
           \underbrace{- \int_0^{\ell_0} f_2 \overline{u_{xxx}} dx -\int_0^{\ell_0}  v_x \overline{f_{1,xx}} dx}_{=X_5}.
        \end{eqnarray*}
        Therefore we have that 
         \begin{eqnarray}\label{u3(0)}
          \dfrac{\alpha }{2|\lambda|}|u_{xxx}(0)|^2&=& \frac{1}{|\lambda|}X_3+ \frac{1}{|\lambda|}X_4+ \frac{1}{|\lambda|}X_5.
        \end{eqnarray}
Using the transmission conditions and Lemma \ref{viscco}
         \begin{eqnarray*}
|X_3|&=& |\underbrace{\dfrac{\alpha }{2}|w_{xxx}(\ell_0^+)|^2}_{\leq {C}{|\lambda|^{\frac{6}{8}}}\mathfrak{P}^2}+\underbrace{\dfrac{\alpha }{2}|v_{x}(\ell_0^-)|^2}_{\leq {C}{|\lambda|^{-\frac{1}{4}}}\mathfrak{P}^2}+ \underbrace{[ i \lambda v \overline{u_{xx}}]_{0}^{\ell_0}}_{\leq {C}{|\lambda|^{\frac{6}{8}}}\mathfrak{P}^2}|
\leq  C|\lambda|^{\frac{6}{8}}\mathfrak{P}^2.
        \end{eqnarray*}
        So, we have that 
        $$
         \frac{1}{|\lambda|}X_3\leq \dfrac{C}{|\lambda|^{1/2}}\|F\|^2_{\mathcal{H}}+\epsilon \|U\|^2_{\mathcal{H}}.
         $$
        Using that $|u_{xxx}(\xi_1^+)|\leq c\|u_{xxx}\|^{1/2}\|u_{xxxx}\|^{1/2}\leq c|\lambda|^{3/4}\|u_{xx}\|^{1/4}\|v\|^{3/4}$ we obtain
         \begin{eqnarray*}
|X_4|&=&\frac \alpha 2\left|[u_{xxx}(\xi_1^-)-u_{xxx}(\xi_1^+)][u_{xxx}(\xi_1^-)+u_{xxx}(\xi_1^+]\right|\\
        &=&\frac \alpha 2\left|\salto{u_{xxx}}{1}[\salto{u_{xxx}}{1}-u_{xxx}(\ell_0^-)]\right|\\
        &\leq &\frac \alpha 2 \salto{u_{xxx}}{1}^2 +\left|\salto{u_{xxx}}{1}u_{xxx}(\xi_1^-)\right|\\
        &\leq &c\mathfrak{R}^2 +c|\lambda|^{3/4}\|U\|^{3/2}\|F\|^{1/2}.
        \end{eqnarray*}
               Then we find
        $$
         \frac{1}{|\lambda|}X_4\leq \dfrac{C}{|\lambda|}\|F\|^2_{\mathcal{H}}+\epsilon \|U\|^2_{\mathcal{H}}.
         $$
Using Lemma \ref{util}  we get
       \begin{eqnarray*}
|X_5|
        &\leq &c|\lambda|^{1/2}\mathfrak{R}^2,
        \end{eqnarray*}
                and  
        $$
         \frac{1}{|\lambda|}X_5\leq \dfrac{C}{|\lambda|}\|F\|^2_{\mathcal{H}}+\epsilon \|U\|^2_{\mathcal{H}}.
         $$
Substituting  the corresponding inequalities of  $X_3$, $X_4$ and  $X_5$ into \eqref{u3(0)},
 our conclusion follows. To get the second inequality, we can follow the same procedure over the interval $(\ell_1,\ell)$. 
    \end{proof}

\begin{thm}\label{Teo2} 
      Under the above conditions, we have that the semigroup $(e^{\mathcal{A}t})_{t \ge 0}$ associated to system \eqref{eq21}--\eqref{wxx}  is of Gevrey-4-class. 
    \end{thm}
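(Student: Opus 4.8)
By the sufficient condition of Taylor, Theorem~\ref{defGe}, it is enough to establish the resolvent estimate
\begin{equation*}
  \limsup_{\lambda\in\mathbb{R},\,|\lambda|\to\infty}\ |\lambda|^{1/4}\,\bigl\|(i\lambda-\mathcal{A})^{-1}\bigr\|_{\mathcal{L}(\mathcal{H})}\;<\;\infty ,
\end{equation*}
which is condition \eqref{condClasG} with $\mu=\tfrac14$ and therefore gives Gevrey-$\delta$-class for every $\delta>1/\mu=4$, i.e.\ Gevrey-$4$-class. Since Lemma~\ref{Imag} already yields $i\mathbb{R}\subset\varrho(\mathcal{A})$, the resolvent is globally defined and only its growth as $|\lambda|\to\infty$ is at issue. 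So I fix $F\in\mathcal{H}$, $|\lambda|$ large, and $U=(u,v)^{\top}\in D(\mathcal{A})$ solving the resolvent system \eqref{reso}, i.e.\ $i\lambda U-\mathcal{A}U=F$, and the goal is reduced to
\begin{equation*}
  \|U\|_{\mathcal{H}}^{2}\;\le\;\frac{C}{|\lambda|^{1/2}}\,\|F\|_{\mathcal{H}}^{2}\qquad\text{for }|\lambda|\text{ large.}
\end{equation*}

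I split $\|U\|_{\mathcal{H}}^{2}$ as $\int_{0}^{\ell_0}(|v|^{2}+\alpha|u_{xx}|^{2})\,dx+\int_{\ell_0}^{\ell_1}(|v|^{2}+\alpha|u_{xx}|^{2})\,dx+\int_{\ell_1}^{\ell}(|v|^{2}+\alpha|u_{xx}|^{2})\,dx$, and treat the viscoelastic and the two elastic portions separately. The middle integral is handled directly by Lemma~\ref{pro12}: its bound, together with the definitions of $\mathfrak{P}$ and $\mathfrak{R}$ and a Young inequality, gives $\int_{\ell_0}^{\ell_1}(|v|^{2}+\alpha|u_{xx}|^{2})\,dx\le\epsilon\|U\|_{\mathcal{H}}^{2}+C|\lambda|^{-1/2}\|F\|_{\mathcal{H}}^{2}$. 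Hence the whole problem localizes to the two elastic cells $(0,\ell_0)$ and $(\ell_1,\ell)$.

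For $(0,\ell_0)$ — the interval $(\ell_1,\ell)$ being treated identically using the second statement of each lemma below — I would assemble the observability-type identities already proved. Lemma~\ref{lemma:obs} (the multiplier identity with $q(x)=x$) controls $\int_{0}^{\ell_0}(|v|^{2}+\alpha|u_{xx}|^{2})\,dx$ by a constant times the interface energy $I(\ell_0^{-})=\tfrac12\alpha|u_{xx}(\ell_0^{-})|^{2}+\tfrac12|v(\ell_0^{-})|^{2}$, up to a remainder $\le C|\lambda|^{-2/3}\|F\|_{\mathcal{H}}^{2}+\epsilon\|U\|_{\mathcal{H}}^{2}$. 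Lemma~\ref{lemma:obs2} then transports that energy to the clamped end, $I(\ell_0^{-})\le I(0)+C|\lambda|^{-2/3}\|F\|_{\mathcal{H}}^{2}+\epsilon\|U\|_{\mathcal{H}}^{2}$, and since $v(0)=0$ we have $I(0)=\tfrac12\alpha_{1}|u_{xx}(0)|^{2}$. Now the crucial Lemma~\ref{Funda} bounds $|u_{xx}(0)|$ by $C|\lambda|^{-1/2}|u_{xxx}(0)|$ plus the manifestly negligible terms $C|\lambda|^{-3/4}\|F\|_{\mathcal{H}}$ and $C|\lambda|^{-1/2}e^{-|\lambda|^{1/2}\ell_0}\|U\|_{\mathcal{H}}^{1/2}\|F\|_{\mathcal{H}}^{1/2}$; after squaring, these two contribute only $\epsilon\|U\|_{\mathcal{H}}^{2}+C|\lambda|^{-3/2}\|F\|_{\mathcal{H}}^{2}$, while Lemma~\ref{lema_aju2} supplies exactly $|\lambda|^{-1}|u_{xxx}(0)|^{2}\le C|\lambda|^{-1/2}\|F\|_{\mathcal{H}}^{2}+\epsilon\|U\|_{\mathcal{H}}^{2}$. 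Composing the three steps, $I(0)$, then $I(\ell_0^{-})$, and finally $\int_{0}^{\ell_0}(|v|^{2}+\alpha|u_{xx}|^{2})\,dx$ are all $\le C|\lambda|^{-1/2}\|F\|_{\mathcal{H}}^{2}+\epsilon\|U\|_{\mathcal{H}}^{2}$; the same holds over $(\ell_1,\ell)$.

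Adding the three pieces gives $\|U\|_{\mathcal{H}}^{2}\le C|\lambda|^{-1/2}\|F\|_{\mathcal{H}}^{2}+C\epsilon\|U\|_{\mathcal{H}}^{2}$; choosing $\epsilon$ small and absorbing the last term yields $\|U\|_{\mathcal{H}}\le C|\lambda|^{-1/4}\|F\|_{\mathcal{H}}$, which is the sought resolvent estimate, and Theorem~\ref{defGe} concludes. The real obstacle is precisely the one Lemma~\ref{Funda} is designed to remove: inside the Kelvin--Voigt region the dissipation identity \eqref{uno} controls only $v_{xx}$ and the point values at $\xi_1,\xi_2$, so it says nothing about the interface derivatives $u_{xx}(\ell_0^{\pm})$, $u_{xxx}(\ell_0^{\pm})$, and no observability inequality is available there. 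One therefore propagates information from the clamped boundary across the first elastic cell by solving the fourth-order resolvent ODE explicitly, the decisive gain coming from the exponential weight $e^{-\sigma\xi_1}$ with $\sigma\sim|\lambda|^{1/2}$. With that in hand, the only remaining delicacy in the proof of the theorem is the bookkeeping of the exponents; the bottleneck is the $|\lambda|^{3/4}$ growth of the interface quantity $|u_{xxx}(\ell_0^{+})|^{2}$ arising from Lemma~\ref{viscco}, and it is exactly this that pins the exponent $\mu=\tfrac14$ and hence the Gevrey order $4$.
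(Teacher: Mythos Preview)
Your proposal is correct and follows essentially the same route as the paper: bound the viscoelastic cell via Lemma~\ref{pro12}, then for each elastic cell chain together Lemma~\ref{lemma:obs} (integral $\leq C\,I(\ell_0^{-})+\text{small}$), Lemma~\ref{lemma:obs2} ($I(\ell_0^{-})\leq I(0)+\text{small}$), Lemma~\ref{Funda} ($I(0)\leq C|\lambda|^{-1}|u_{xxx}(0)|^{2}+\text{small}$), and Lemma~\ref{lema_aju2} ($|\lambda|^{-1}|u_{xxx}(0)|^{2}\leq C|\lambda|^{-1/2}\|F\|_{\mathcal{H}}^{2}+\epsilon\|U\|_{\mathcal{H}}^{2}$), exactly as the paper does, only presented in reverse order. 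Your identification of the bottleneck is also right: the $|\lambda|^{3/4}\mathfrak{P}^{2}$ growth of the moment--shear trace at the interface (coming from Lemma~\ref{viscco}) is what survives in the $X_{3}$ term of Lemma~\ref{lema_aju2} and fixes $\mu=1/4$.
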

\begin{proof}
Application of Lemma \ref{Funda} yields the following inequality 
$$
I(0)\leq \frac{c}{|\lambda|}|u_{xxx}(0)|^2+\frac{c}{|\lambda|^{3/2}}\mathfrak{R}^2,\quad 
I(\ell)\leq \frac{c}{|\lambda|}|u_{xxx}(\ell)|^2+\frac{c}{|\lambda|^{3/2}}\mathfrak{R}^2.
$$
Applying Lemma~\ref{lema_aju2}, we obtain
\[
I(0)\leq \frac{C}{|\lambda|^{1/2}}\|F\|_{\mathcal{H}}^2 + \epsilon \|U\|_{\mathcal{H}}^2, 
\quad 
I(\ell)\leq \frac{C}{|\lambda|^{1/2}}\|F\|_{\mathcal{H}}^2 + \epsilon \|U\|_{\mathcal{H}}^2.
\]
Next, by Lemma~\ref{lemma:obs2}, we have
\[
I(\ell_0)\leq \frac{C}{|\lambda|^{1/2}}\|F\|_{\mathcal{H}}^2 + \epsilon \|U\|_{\mathcal{H}}^2, 
\quad 
I(\ell_1)\leq \frac{C}{|\lambda|^{1/2}}\|F\|_{\mathcal{H}}^2 + \epsilon \|U\|_{\mathcal{H}}^2.
\]
Finally, invoking Lemma~\ref{pro12} together with Lemma~\ref{lemma:obs}, we conclude that
\[
\|U\|_{\mathcal{H}}^2 \leq \frac{C}{|\lambda|^{1/2}}\|F\|_{\mathcal{H}}^2 + \epsilon \|U\|_{\mathcal{H}}^2,
\]
from which the desired estimate follows. Indeed, there exists a constant \( C > 0 \) such that
\begin{equation}\label{final}
    |\lambda|^{1/4} \, \|(i\lambda - \mathcal{A})^{-1}\|_{\mathcal{L}(\mathcal{H})} \leq C.
\end{equation}

\end{proof}

\begin{rem}
As a direct consequence we get the semigroup $(e^{\mathcal{A}t})_{t \ge 0}$ is instantaneous differentiable \cite{Crand-Pazy} and immediately norm continuous
(immediately uniformly continuous) \cite{davies}. Moreover since $0\in\varrho(\mathcal{A})$ inequality \eqref{final} implies the exponential stability which together with 
the norm continued property implies the spectrum-determined growth property \cite[p. 299]{engel}, that means that 
\[
 \omega_0 := \inf\{\omega \in \mathbb{R}:\quad \exists M_\omega \ge 1 \text{ such that }\|e^{\mathcal{A}t}\|\le M_\omega\,e^{\omega t}\quad \forall t \ge 0\} =\sup\{\operatorname{Re}\lambda: \quad \lambda \in \sigma(\mathcal{A})\}.
\]
\end{rem}
\centerline{\textsc{Funding}}

{\small J.E. Mu\~{n}oz Rivera would like to thank CNPq project 307947/2022-0 for the financial support. Project Fondecyt 1230914.
M.G. Naso has been partially supported by Gruppo Nazionale per la Fisica Matematica (GNFM) of the Istituto Nazionale di Alta Matematica
(INdAM).}

\medskip

\centerline{\textsc{Disclosure statement}}

{\small This work does not have any conflict of interest.}

%

%
%
\providecommand{\bysame}{\leavevmode\hbox to3em{\hrulefill}\thinspace}
\providecommand{\MR}{\relax\ifhmode\unskip\space\fi MR }
\providecommand{\MRhref}[2]{%
  \href{http://www.ams.org/mathscinet-getitem?mr=#1}{#2}
}
\providecommand{\href}[2]{#2}

\end{document}